\newcommand{\R}{ {\mathbb R} }
\newcommand{\Z}{ {\mathbb Z} }
\newcommand{\F}{ {\mathbb F} }
\newcommand{\va}{ \mathbf{a} }
\newcommand{\vb}{ \mathbf{b} }
\newcommand{\vs}{ \mathbf{s} }
\newcommand{\vt}{ \mathbf{t} }
\newcommand{\vv}{ \mathbf{v} }
\newcommand{\vw}{ \mathbf{w} }
\newcommand{\vx}{ \mathbf{x} }
\newcommand{\vy}{ \mathbf{y} }
\newcommand{\vz}{ \mathbf{z} }
\newcommand{\vzero}{ \mathbf{0} }
\newcommand{\vone}{ \mathbf{1} }
\newcommand{\st}{\mbox{s.t. }}
\newcommand{\dmsfn}{{\sc DMSFN}}
\newcommand{\twolinecelleq}[2]{
%
\setlength{\abovedisplayskip}{0pt}
\setlength{\belowdisplayskip}{0pt}
\parbox{2.0in}{
\centering
\begin{multline*}
#1\\
#2
\end{multline*}
}
}
\newcommand{\boundary}{\partial}
\newcommand{\abs}[1]{\lvert#1\rvert}
\newcommand{\norm}[1]{\lVert#1\rVert}
\theoremstyle{plain}
\newtheorem{theorem}{Theorem}[section]
\newtheorem{lemma}[theorem]{Lemma}
\theoremstyle{definition}
\newtheorem{definition}[theorem]{Definition}
\theoremstyle{remark}
\newtheorem{remark}[theorem]{Remark}
\newcommand{\spn}{\operatorname{span}}
\newcommand{\spt}{\operatorname{spt}}
\newcommand{\perimeter}{\operatorname{Per}}
\newcommand{\per}{\operatorname{P}}
\newcommand{\diam}{\operatorname{D}}
\newcommand{\intrr}{\operatorname{Int}}
\newcommand{\vol}{\operatorname{V}}
\newcommand{\mass}{\operatorname{M}}
\newcommand{\MSFN}{multiscale simplicial flat norm}
\newcommand{\OHCP}{optimal homologous chain problem}
\newcommand{\OBCP}{optimal bounding chain problem}
\title{\MakeUppercase{Simplicial Flat Norm with Scale}}
\author{Sharif~Ibrahim,\thanks{\affil{Department of Mathematics, Washington State University, Pullman, WA 99164-3113},\newline
\email{\{math.msfn,bkrishna,vixie\}@\{sharifibrahim.com,math.wsu.edu,speakeasy.net\}}}\,\,
  Bala~Krishnamoorthy,\footnotemark[1]\,\,\footnote{Corresponding author}\,\,
  Kevin R.~Vixie\footnotemark[1]
}
\begin{document}

\maketitle

\begin{abstract}
\noindent We study the multiscale simplicial flat norm (MSFN) 
problem, which computes flat norm at various scales of sets defined as
oriented subcomplexes of finite simplicial complexes in arbitrary
dimensions.  We show that the \MSFN{} is NP-complete when homology is defined
over integers. We cast the \MSFN{} as an instance of integer linear
optimization.  Following recent results on related problems, the \MSFN{}
integer program can be solved in polynomial time by solving its linear
programming relaxation, when the simplicial complex satisfies a simple
topological condition (absence of relative torsion). Our most
significant contribution is the {\em simplicial deformation theorem},
which states that one may approximate a general current with a
simplicial current while bounding the expansion of its mass. We
present explicit bounds on the quality of this approximation, which
indicate that the simplicial current gets closer to the original
current as we make the simplicial complex finer. The \MSFN{} opens up the
possibilities of using flat norm to denoise or extract scale
information of large data sets in arbitrary dimensions. On the other
hand, it allows one to employ the large body of algorithmic results on
simplicial complexes to address more general problems related to
currents.
\end{abstract}

\section{Introduction} \label{sec:introduction} 

{\em Currents} are standard objects studied in geometric measure
theory, and are named so by analogy with electrical currents that have
a kind of magnitude and direction at every point. Intuitively, one
could think of currents as generalized surfaces with orientations and
multiplicities. The mathematical machinery of currents has been used
to tackle many fundamental questions in geometric analysis, such as
the ones related to area minimizing surfaces, isoperimetric problems,
and soap-bubble conjectures \cite{Morgan2008}. 

To formally define $d$-currents in $\R^n$, we first let
$\mathcal{D}^d$ be the set of $C^\infty$ differentiable $d$-forms with
compact support.  Then the set of $d$-currents is given by the dual
space of $\mathcal{D}^d$ (denoted $\mathcal{D}_d$) with the weak
topology.  We denote by $\mathcal{R}_m$ the set of rectifiable
currents, which contains all currents that represent oriented
rectifiable sets (i.e., sets which are almost everywhere the countable
union of images of Lipschitz maps from $\R^m$ to $\R^n$) with integer
multiplicities and finite total mass (with multiplicities).

The {\em mass} $\mass(T)$ of a $d$-dimensional current $T$ can be
thought of intuitively as the weighted $d$-dimensional volume of the
generalized object represented by $T$. For instance, the mass of a
$2$-dimensional current can be taken as the area of the surface it
represents.  Formally, the mass of $T$ is given by $M(T) = \sup_{\phi
  \in \mathcal{D}^d} \{T(\phi) \mid \sup \| \phi(x) \| \leq 1\}.$

The boundary $\boundary T$ of a current $T$ is defined by duality with
forms.  That is, we have $\boundary T(\phi) = T(d \phi)$ for every
differential form $\phi \in \mathcal{D}^d$.  Note that when $T$
represents a smooth oriented manifold with boundary, this corresponds
to the usual definition of boundary.  We restrict our attention to
integral currents $T$ that are rectifiable currents with a rectifiable
boundary (i.e., $T \in \mathcal{R}_m$ and $\boundary T \in
\mathcal{R}_{m-1}$).
The {\em flat norm} of a $d$-dimensional current $T$ is given by
\begin{equation}
\label{eq:flatnorm}
\F(T) = \min_S \{ \mass(T-\boundary S) + \mass(S)~\big|~ T-\boundary S 
	\in {\mathscr E}_d, \, S \in {\mathscr E}_{d+1}\},
\end{equation}
where ${\mathscr E}_d$ is the set of $d$-dimensional currents with
compact support. One also uses flat norm to measure the ``distance''
between two $d$-currents. More precisely, the flat norm distance
between two $d$-currents $T$ and $P$ is given by
\begin{equation} \label{eq:flatdist}
\F(T,P) = \inf \{\mass(Q) + \mass(R) \, \big| \, T-P = Q+\boundary R,
\, Q \in {\mathscr E}_d, \, R \in {\mathscr E}_{d+1}\}.
\end{equation}
Morgan and Vixie \cite{MoVi2007} showed that the $L^1$ total variation
functional ($L^1$TV) introduced by Chan and Esedo\=glu \cite{ChEs2005}
computes the flat norm for boundaries $T$ with integer
multiplicity. Given this correspondence, and the use of {\em scale} in
$L^1$TV, Morgan and Vixie defined \cite{MoVi2007} the flat norm with
scale $\lambda \in [0,\infty)$ of an oriented $d$-dimensional set $T$
as
\begin{equation}
\label{eq:flatnormwscale}
\F_{\lambda}(T) \equiv \min_S \{ \vol_d(T-\boundary S) + \lambda \vol_{d+1}(S) \},
\end{equation}
where $S$ varies over oriented $(d+1)$-dimensional sets, and $\vol_d$
is the $d$-dimensional volume, used in place of mass. Figure
\ref{fig:1dcurrent} illustrates this definition. Flat norm of the 1D
current $T$ is given by the sum of the length of the resulting
oriented curve $T - \boundary S$ (shown separated from the input curve
for clarity) and the area of the 2D patch $S$ shown in red. Large
values of $\lambda$, above the curvature of both humps in the curve
$T$, preserve both humps. Values of $\lambda$ between the two
curvatures eliminate the hump on the right. Even smaller values
``smooth out'' both humps as illustrated here, giving a more ``flat''
curve, as $S$ can now be comprised of much bigger 2D patches.

\begin{figure}[ht!]  
\centering
\includegraphics[scale=0.7, trim=1.25in 8.3in 2.8in 1in, clip]{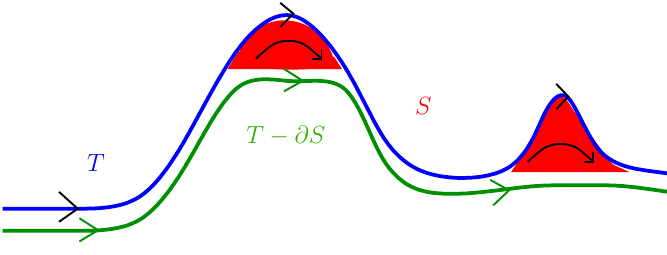}
\caption{\label{fig:1dcurrent} 1D current $T$, and flat norm
decomposition $T-\boundary S$ at appropriate scale $\lambda$. The
resulting current is shown slightly separated from the input current
for clearer visualization.}
\end{figure} 

Figure \ref{fig:1dcurrent} illustrates the utility of flat norm for
deblurring or smoothing applications, e.g., in 3D terrain maps or 3D
image denoising. But efficient methods for computing flat norm are
known only for certain types of currents in two dimensions. For $d=1$,
Under the setting where $T$ is a boundary, i.e., a loop, embedded in
$\R^2$ and the minimizing surface $S \in \R^2$ as well, the flat norm
could be calculated efficiently, for instance, using graph cut methods
\cite{KoZa2002} -- see the work of Goldfarb and Yin \cite{GoYi2009}
and Vixie et al.~\cite{Vietal2010}, and references therein. Motivated
by applications in image analysis, these approaches usually worked
with a grid representation of the underlying space ($\R^2$). Pixels in
the image readily provide such a representation.

While it is computationally convenient that $L^1$TV minimizers give us
the scaled flat norm for the input images, this approach restricts us
to currents that are boundaries of codimension 1.  Correspondingly,
the calculation of flat norm for $1$-boundaries embedded in higher
dimensional spaces, e.g., $\R^3$, or for input curves that are not
necessarily boundaries has not received much attention so
far. Similarly, flat norm calculations for higher dimensional input
sets have also not been well-studied. Such situations often appear in
practice -- for instance, consider the case of an input set $T$ that
is a curve sitting on a manifold embedded in $\R^3$, with choices for
$S$ restricted to this manifold as well. Further, computational
complexity of calculating flat norm in arbitrary dimensions has not
been studied. But this is not a surprising observation, given the
continuous, rather than combinatorial, setting in which flat norm
computation has been posed so far.

Simplicial complexes that triangulate the input space are often used
as representations of manifolds. Such representations use triangular
or tetrahedral meshes \cite{Edbook2006} as opposed to the uniform
square or cubical grid meshes in $\R^2$ and $\R^3$. Various simplicial
complexes are often used to represent data (in any dimension) that
captures interactions in a broad sense, e.g., the Vietoris--Rips
complex to capture coverage of coordinate-free sensor networks
\cite{SiGh2006,SiGh2007a}. It is natural to consider flat norm
calculations in such settings of simplicial complexes for denoising or
regularizing sets, or for other similar tasks. At the same time,
requiring that the simplicial complex be embedded in high dimensional
space modeled by regular square grids may be cumbersome, and
computationally prohibitive in many cases.

\subsection{\large Our Contributions} \label{ssec:ourconts}

We define a {\em simplicial} flat norm (SFN) for an input set $T$
given as a subcomplex of the finite oriented simplicial complex $K$
triangulating the set, or underlying space $\Omega$. More generally,
$T$ is the simplicial representation of a rectifiable current with
integer multiplicities. The choices of the higher dimensional sets $S$
are restricted to $K$ as well. We extend this definition to the
multiscale simplicial flat norm (MSFN) by including a scale parameter
$\lambda$. The simplicial flat norm is thus a special case of the
\MSFN{} with the default value of $\lambda=1$.

This discrete setting lets us address the worst case complexity of
computing flat norm. Given its combinatorial nature, one would expect
the problem to be difficult in arbitrary dimensions.  Indeed, we show
the problem of computing the \MSFN{} is NP-complete by reducing the
optimal bounding chain problem (OBCP), which was recently shown to be
NP-complete \cite{DuHi2011}, to a special case of the \MSFN{}
problem. We cast the problem of finding the optimal $S$, and thus
calculating the \MSFN{}, as an integer linear programming (ILP)
problem.  Given that the original problem is NP-complete, instances of
this ILP could be hard to solve. Utilizing recent work \cite{DHK2010}
on the related optimal homologous chain problem (OHCP), we provide
conditions on $K$ under which this ILP problem can in fact be solved
in polynomial time. In particular, the \MSFN{} can be computed in
polynomial time when $T$ is $d$-dimensional, and $K$ is
$(d+1)$-dimensional and orientable, for all $d \geq 0$. A similar
result holds for the case when $T$ is $d$-dimensional, and $K$ is
$(d+1)$-dimensional and embedded in $\R^{d+1}$, for all $d \geq 0$.

Our most significant contribution is the {\em simplicial deformation
  theorem} (Theorem \ref{thm:simpldeform}), which states that given an
arbitrary $d$-current in $\abs{K}$ (underlying space), we are assured
of an approximating current in the $d$-skeleton of $K$. This result is
a substantial modification and generalization of the classical
deformation theorem for currents on to square grids. Our deformation
theorem explicitly specifies the dependence of the bounds of
approximation on the regularity and size of the simplices in the
simplicial complex. Hence it is immediate from the theorem that as we
refine the simplicial complex $K$ while preserving the bounds on
simplicial regularity, the flat norm distance between an arbitrary
$d$-current in $\abs{K}$ and its deformation onto the $d$-skeleton of
$K$ vanishes. More importantly, such refinement of $K$ does not affect
the efficient computability of the \MSFN{} by solving the associated
ILP in many cases, e.g., when $K$ is orientable or when it is
full-dimensional.

\subsection{Work on Related Problems} \label{ssec:relprobs}

The problem of computing \MSFN{} is closely related to two other
problems on chains defined on simplicial complexes -- the optimal
homologous chain problem (OHCP) and the optimal bounding chain problem
(OBCP). Given a $d$-chain $\vt$ of the simplicial complex $K$, the
\OHCP{} is to find a $d$-chain $\vx$ that is homologous to $\vt$ such
that $\norm{\vx}_1$ is minimal. In the \OBCP{}, we are given a
$d$-chain $\vt$ of $K$, and the goal is to find a $(d+1)$-chain $\vs$
of $K$ whose boundary is $\vt$ and $\norm{\vs}_1$ is minimal. The
\OBCP{} is closely related to the problem of finding an
area-minimizing surface with a given boundary
\cite{Morgan2008}. Computing the \MSFN{} could be viewed, in a simple
sense, as combining the objectives of the corresponding optimal
homologous chain and optimal bounding chain problem instances, with
the scale factor determining the relative importance of one objective
over the other.

When $\vt$ is a cycle and the homology is defined over $\Z_2$, Chen
and Freedman showed that the \OHCP{} is NP-hard \cite{ChFr2010a}. Dey,
Hirani, and Krishnamoorthy \cite{DHK2010} studied the original version
of the \OHCP{} with homology defined over $\Z$, and showed that the problem
is in fact solvable in polynomial time when $K$ satisfies certain
conditions (when it has no relative torsion).  Recently, Dunfield and
Hirani \cite{DuHi2011} have shown that the \OHCP{} with homology defined
over $\Z$ is NP-complete. We will use their results to show that the
problem of computing the \MSFN{} is NP-complete (see Section
\ref{ssec:npcompMSFN}). These authors also showed that the \OBCP{} with
homology defined over $\Z$ is NP-complete as well. Their result builds
on the previous work of Agol, Hass, and Thurston \cite{AgHaTh2006},
who showed that the knot genus problem is NP-complete, and a slightly
different version of the least area surface problem is NP-hard.

The standard simplicial approximation theorem from algebraic topology
describes how continuous maps are approximated by simplicial maps that
satisfy the star condition \cite[\S14]{Munkres1984}. Our simplicial
deformation theorem applies to currents, which are more general
objects than continuous maps. More importantly, we present explicit
bounds on the expansion of mass of the current resulting from
simplicial approximation. In his PhD thesis, Sullivan
\cite{Sullivan1990} considered deforming currents on to the boundary
of convex sets in a cell complex, which are more general than the
simplices we work with. But simplicial complexes admit efficient
algorithms more naturally than cell complexes. We adopt a different
approach for deformation from Sullivan and obtain new bounds on the
approximations (see Section \ref{ssec:compbnds}). Along with the
\MSFN{}, our deformation theorem also establishes how the \OHCP{} and
\OBCP{} could be used on general continuous inputs by taking
simplicial approximations, thus expanding widely the applicability of
this family of techniques.

\section{Definition of Simplicial Flat Norm} \label{sec:defsfn}

Consider a finite $p$-dimensional simplicial complex $K$ triangulating
the set $\Omega$, where the simplices are oriented, with $p \geq
d+1$. The set $T$ is defined as the integer multiple of an oriented
$d$-dimensional subcomplex of $K$, representing a rectifiable
$d$-current with integer multiplicity. Let $m$ and $n$ be the number
of $d$- and $(d+1)$-dimensional simplices in $K$, respectively. The
set $T$ is then represented by the $d$-chain $\sum_{i=1}^m t_i
\sigma_i$, where $\sigma_i$ are all $d$-simplices in $K$ and $t_i$ are
the corresponding {\em weights}. We will represent this chain by the
vector of weights $\vt \in \Z^m$. We use bold lower case letters to
denote vectors, and the corresponding letter with subscript to denote
components of the vector, e.g., $\vx = [x_j]$. For $\vt$ representing
the set $T$ with integer multiplicity of one, $t_i \in \{-1,0,1\}$
with $-1$ indicating that the orientations of $\sigma_i$ and $T$ are
opposite. But $t_i$ can take any integer value in general. Thus, $\vt$
is the representation of $T$ in the elementary $d$-chain basis of $K$.
We consider $(d+1)$-chains in $K$ modeling sets $S$ representing
rectifiable $(d+1)$-currents with integer multiplicities, and denote
them similarly by $\sum_{j=1}^n s_j \tau_j$ in the elementary
$(d+1)$-chain basis of $K$ consisting of the individual simplices
$\tau_j$. We denote the chain modeling such a set $S$ using the
corresponding vector of weights $\vs \in \Z^n$.

Relationships between the $d$- and $(d+1)$-chains of $K$ are captured
by its $(d+1)$-boundary matrix $[\boundary_{d+1}]$, which is an $m
\times n$ matrix with entries in $\{-1,0,1\}$. If the $d$-simplex
$\sigma_i$ is a face of the $(d+1)$-simplex $\tau_j$, then the $(i,j)$
entry of $[\boundary_{d+1}]$ is nonzero, otherwise it is zero. This
nonzero value is $+1$ if the orientations of $\sigma_i$ and $\tau_j$
agree, and is $-1$ when their orientations are opposite. The $d$-chain
representing the set $T - \boundary_{d+1} S$ is then given as 
\begin{equation*}
\label{eq:xeqTminusbdyS}
\vx = \vt - [\boundary_{d+1}] \vs.
\end{equation*}
Notice that $\vx \in \Z^m$. We define the simplicial flat norm (SFN)
of $T$ represented by the $d$-chain $\vt$ in the $(d+1)$-dimensional
simplicial complex $K$ as
\begin{equation}
\label{eq:SFNdef}
F_S(T) = \min_{\vs \in \Z^n} \left\{ \sum_{i=1}^m \vol_d(\sigma_i)
\abs{x_i} + \sum_{j=1}^n \vol_{d+1}(\tau_j) \abs{s_j} ~\big|~ \vx = \vt -
[\boundary_{d+1}] \vs,\, \vx \in \Z^m \right\}.
\end{equation}
Since $\vx$ and $\vs$ are chains in a simplicial complex, the masses
of the currents they represent (as given in
Equation~\ref{eq:flatnorm}) are indeed given by the weighted sums of
the volumes of the corresponding simplices. The integer restrictions
$\vx \in \Z^m$ and $\vs \in \Z^n$ are important in this definition as
we are studying currents with integer multiplicities.  The simplicial
flat norm is intuitively the problem of deforming an input chain to
another chain of least cost, where cost is determined both by the mass
of the resulting chain and the size of the deformation (constrained to
the complex) used to get it.  For instance, in a triangulation of a
manifold, we constrain ourselves to only use deformations on the
manifold.  We generalize the definition of SFN to define a {\em
  multiscale} simplicial flat norm (MSFN) of $T$ in the simplicial
complex $K$ by including a scale parameter $\lambda \in [0,\infty)$.
\begin{equation}
\label{eq:MSFNdef}
F^{\lambda}_S(T) = \min_{\vs \in \Z^n} \left\{ \sum_{i=1}^m
\vol_d(\sigma_i) \abs{x_i} + \lambda \left( \sum_{j=1}^n \vol_{d+1}(\tau_j)
\abs{s_j} \right) ~\big|~ \vx = \vt - [\boundary_{d+1}] \vs,\, \vx
\in \Z^m \right\}.
\end{equation}
This definition is the simplicial version of the multiscale flat norm
defined in Equation (\ref{eq:flatnormwscale}). The default, or
nonscale, simplicial flat norm in Equation~(\ref{eq:SFNdef}) is a
special case of the multiscale simplicial flat norm with the default
value of $\lambda=1$.

The (non-simplicial) flat norm with scale $\lambda > 0$ of a
$d$-dimensional current $T$ can be rewritten as $\F_{\lambda}(T) =
\lambda^d \cdot \F_{1}(T/\lambda)$.  Thus the flat norm with scale can
be thought of as the traditional flat norm applied to a scaled copy of
the input current.  An equivalent statement can be made for the
simplicial flat norm, but crucially requires that the simplicial
complex be similarly scaled.  To avoid this complex scaling issue
especially when considering all possible scales, and to simplify our
notation, we henceforth study the more general multiscale simplicial
flat norm (which also allows us to consider the $\lambda = 0$ case).

We assume the $d$- and $(d+1)$-dimensional volumes of simplices to be
any nonnegative values. For example, when $\sigma_i$ is a $1$-simplex,
i.e., edge, $\vol_1(\sigma_i)$ could be taken as its Euclidean
length. Similarly, $\vol_2(\tau_j)$ for a triangle $\tau_j$ could be
its area. For ease of notation, we denote $\vol_d(\sigma_i)$ by $w_i$
and $\vol_{d+1}(\tau_j)$ by $v_j$, with the dimensions $d$ and $d+1$
evident from the context.

\begin{remark}
\label{rem:MSFNmin}
The minimum in the definition of the \MSFN{} (Equation~\ref{eq:MSFNdef})
indeed exists. The function 
\begin{equation}
\label{eq:funcmsfn}
f^{\lambda}(T,S) = \sum_{i=1}^m w_i \abs{x_i} + \lambda \, (
\sum_{j=1}^n v_j \abs{s_j} )~~\,\mbox{ with }~~\,\vx = \vt -
[\boundary_{d+1}] \vs\,
\end{equation}
is lower bounded by zero, as it is the sum of nonnegative entries (we
have $\lambda \geq 0$). Notice that $F^{\lambda}_S(T) = \min_{S}
f^{\lambda}(T,S)$. Further, we only consider integral $\vs$ defined on
the finite simplicial complex $K$, and hence there are only a finite
number of values for this function. Hence its minimum indeed exists,
which defines the \MSFN{} of $\vt$. On the other hand, the proof of
existence of minimum in the original definition of flat norm for
rectifiable currents employs the Hahn--Banach theorem
\cite[pg.~367]{Federer1969}.
\end{remark}

We illustrate the optimal decompositions to compute the \MSFN{} for
two different scales ($\lambda=1$ and $\lambda \ll 1$) in Figure
\ref{fig:curveonmesh}. Notice that the input set $T$, shown in blue,
is not a closed loop here. It is a subcomplex of the simplicial
complex triangulating $\Omega$. The underlying set $\Omega$ need not
be embedded in $\R^2$ -- it could be sitting in $\R^3$ or any higher
dimension. We do not show the orientations of individual simplices and
chains so as not to clutter the figure. We could take each triangle to
be oriented counterclockwise (CCW), with $T$ oriented CCW as well, and
each edge oriented arbitrarily. When scale $\lambda=1$, we get the
default SFN of $T$, where the $S$ chosen (shown in light pink) is such
that the resulting optimal $T - \boundary S$ (indicated by the thin
curve in dark green) is devoid of all the ``kinks'', but is similar to
$T$ in overall form.  This removal of the tightest ``kinks'' is a
discrete analogue of how the $\lambda$ in the flat norm relates to the
curvature in the continuous case.  For $\lambda \ll 1$, the second
term in the definition (Equation~\ref{eq:MSFNdef}) contributes much
less to the \MSFN{}. As such, the optimal $T- \boundary S$ consists of
a short chain of two edges (shown in light green), which closes the
original $T$ curve to form a loop. $S$ in this case includes the
triangles in the former choice of $S$, and all other triangles
enclosed by the original curve $T$ and the resulting $T -\boundary S$.

\begin{figure}[ht!]
  \centering \includegraphics[scale=0.6, trim=1in 5.5in 0.3in 0.8in,
  clip] {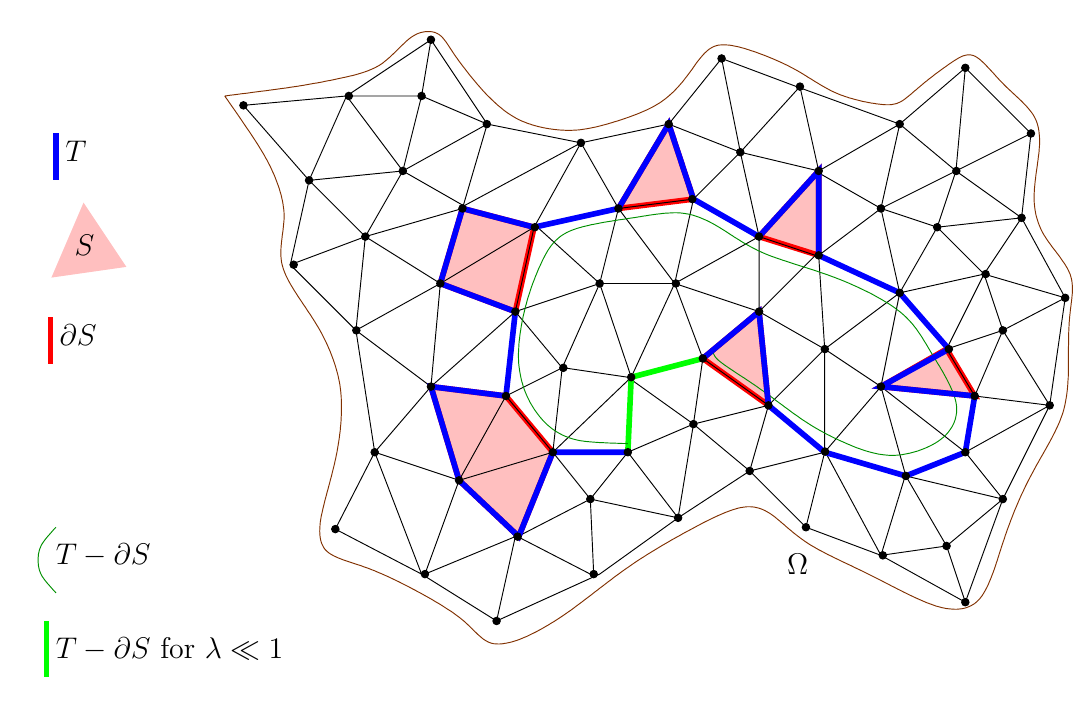}
%
  \caption{\label{fig:curveonmesh} The \MSFN{} illustrated for two different
  scales $\lambda = 1$ and $\lambda \ll 1$. See text for explanation.}
\end{figure}

\subsection{Complexity of \MSFN{}} \label{ssec:npcompMSFN}

To study the complexity of computing the \MSFN{}, we consider a decision
version of the problem, termed {\em decision}-MSFN or \dmsfn. The
function $f^{\lambda}(T,S)$ used here is defined in Equation
\ref{eq:funcmsfn}, with the modification that $w_i$ and $v_j$ are
assumed to be rational for purposes of analyses of complexity.
\begin{definition}[{\rm \dmsfn}] 
  Given a $p$-dimensional finite simplicial complex $K$ with $p \geq
  d+1$, a set $T$ defined as a $d$-subcomplex of $K$, a scale $\lambda
  \in [0,\infty)$, and a rational number $f_0 \geq 0$, does there
    exist a $(d+1)$-dimensional subcomplex $S$ of $K$ such that
    $f^{\lambda}(T,S) \leq f_0$?
\end{definition}
The related optimal homologous chain problem (OHCP) was recently shown
to be NP-complete \cite[Theorem 1.4]{DuHi2011}. We reduce OHCP to a
special case of \dmsfn, thus showing that \dmsfn \ is NP-complete as
well. The default optimization version of MSFN consequently turns out
to be NP-hard.
\begin{theorem} \label{thm:dmsfnnpc}
  \dmsfn \ is NP-complete, and {\rm MSFN} is NP-hard.
\end{theorem}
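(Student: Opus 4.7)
The plan is to prove the theorem in two parts: first verify that \dmsfn\ lies in NP, then exhibit a polynomial-time many-one reduction from the decision version of the Optimal Homologous Chain Problem (OHCP) over $\Z$, which is NP-complete by the Dunfield--Hirani result cited above. NP-hardness of the optimization problem MSFN then follows immediately, since any polynomial-time algorithm for MSFN could decide \dmsfn\ by a single comparison of the optimal value against $f_0$.

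For membership in NP, the natural certificate for a YES instance is the chain $\vs \in \Z^n$ encoding $S$. Given $\vs$, one computes $\vx = \vt - [\boundary_{d+1}]\vs$ by sparse matrix-vector multiplication, evaluates the rational number $f^{\lambda}(T,S) = \sum_i w_i |x_i| + \lambda \sum_j v_j |s_j|$, and checks $f^{\lambda}(T,S) \leq f_0$, all in polynomial time. The delicate point is that among YES witnesses there must exist one of polynomial bit-length; this follows from standard Schrijver-type bounds on the size of integer solutions of linear systems whose constraint matrix (here $[\boundary_{d+1}]$) has entries in $\{-1,0,+1\}$, used in exactly the same way as in the Dunfield--Hirani treatment of OHCP and OBCP.

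For NP-hardness, I would reduce the decision version of OHCP. Given an OHCP instance consisting of a simplicial complex $K$, a $d$-chain $\vt$, and an integer threshold $L$, construct the \dmsfn\ instance with the same $K$ and $\vt$, scale $\lambda = 0$ (permissible since the definition allows $\lambda \in [0,\infty)$), uniform $d$-simplex weights $w_i = 1$, arbitrary nonnegative $(d+1)$-simplex weights $v_j$, and threshold $f_0 = L$. With $\lambda = 0$ and $w_i = 1$ the MSFN objective collapses to $\sum_i |x_i| = \norm{\vt - [\boundary_{d+1}] \vs}_1$, while the constraint $\vx = \vt - [\boundary_{d+1}]\vs$ asserts exactly that $\vx$ is homologous to $\vt$ over $\Z$. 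Hence the \dmsfn\ instance is a YES instance if and only if the original OHCP instance is, completing the Karp reduction.

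The conceptual content of the proof is really just the observation that OHCP is the $\lambda = 0$, unit-weight specialization of the MSFN objective; once noticed, the reduction itself is a few lines. The only technical subtlety is the bit-length bound for NP membership, which is potentially worrying precisely when $\lambda = 0$ because then $\vs$ is not directly controlled by the objective. This is handled by first obtaining a polynomially-bounded optimal $\vx$ (as in OHCP), and then extracting a polynomially-bounded integer $\vs$ solving $[\boundary_{d+1}]\vs = \vt - \vx$ by the same Schrijver-type bound mentioned above.
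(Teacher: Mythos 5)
Your proposal is correct and follows essentially the same route as the paper: membership in NP by direct verification of the certificate $\vs$, and NP-hardness via the specialization $\lambda=0$, $w_i=1$, under which the MSFN objective collapses to the OHCP objective of Dunfield--Hirani. Your explicit attention to the polynomial bit-length of a witnessing $\vs$ is a point the paper's proof passes over silently, and your Schrijver-type bound is the right way to close it.
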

\begin{proof}
  \dmsfn \ lies in NP as we can calculate $f^{\lambda}(T,S)$ in
  polynomial time when given a pair of $d$- and $(d+1)$-chains $\vt$
  and $\vs$, respectively, of the simplicial complex $K$.  On the
  other hand, given an instance of the optimal homologous chain
  decision problem, we can reduce it to the \dmsfn{} by taking
  $\lambda = 0$ and $w_i = 1$ for $1 \leq i \leq m$.  Since the
  \OHCP{} was recently shown to be NP-complete~\cite[Theorem
    1.4]{DuHi2011}, the result follows.
\end{proof}

\begin{remark}
  Although we showed MSFN is NP-hard in general, the case for any
  particular $\lambda > 0$ is not known.  For $\lambda$ large enough,
  the problem in fact becomes easy-- when the $(d+1)$-simplices have
  positive volumes and $\lambda > (\sum w_i) / \min v_j$, then
  optimality occurs when $\vs$ is the empty $(d+1)$-chain.
\end{remark}

We now consider attacking the \MSFN{} problem using techniques from
the area of discrete optimization. Even though the problem is NP-hard,
this approach helps us to identify special cases in which we can
compute the \MSFN{} in polynomial time.

\section{Multiscale Simplicial Flat Norm and Integer Linear Programming} \label{ssec:MSFNILP}

The problem of finding the \MSFN{} of the $d$-chain $\vt$
(Equation~\ref{eq:MSFNdef}) can be cast formally as the following
optimization problem.
\begin{equation}
\label{eq:optprobMSFN}
\begin{array}{ll}
\mbox{minimize} & \sum_{i=1}^m w_i \abs{x_i} + \lambda
     ( \sum_{j=1}^n v_j \abs{s_j} ) \\
\vspace*{-0.13in} \\
\mbox{subject to} & ~~~~~~\vx = \vt - [\boundary_{d+1}] \vs, \\
 & ~~~~~~\vx \in \Z^m,~ \vs \in \Z^n.
\end{array}
\end{equation}
The objective function is piecewise linear in the integer variables
$\vx$ and $\vs$. Using standard modeling techniques from linear
optimization \cite[pg.~18]{BeTs1997}, we can reformulate the problem
as the following integer {\em linear} program (ILP).
\begin{equation}
\label{eq:ILPMSFN}
\begin{array}{ll}
\min & \sum_{i=1}^m w_i (x^+_i + x^-_i) \,+ \,\lambda
     \left( \sum_{j=1}^n v_j (s^+_j + s^-_j) \right) \\
\vspace*{-0.13in} \\
\st & \vx^+ - \vx^-  = \vt - [\boundary_{d+1}] (\vs^+ - \vs^-) \\
 & \vx^+, \vx^- \geq \vzero,~~\vs^+, \vs^- \geq \vzero \\
 & \vx^+, \vx^- \in \Z^m,~~ \vs^+, \vs^- \in \Z^n.
\end{array}
\end{equation}
The objective function coefficients need to be nonnegative for this
formulation to work -- indeed, we have $w_i, v_j$, and $\lambda$
nonnegative. Integer linear programming is NP-complete
\cite{Schrijver1986}. The linear programming relaxation of the ILP 
above is obtained by ignoring the integer restrictions on the
variables.
\begin{equation}
  \label{eq:LPMSFN}
  \begin{array}{ll}
    \min & \sum_{i=1}^m w_i (x^+_i + x^-_i) \,+ \,\lambda
    \left( \sum_{j=1}^n v_j (s^+_j + s^-_j) \right) \\
    \vspace*{-0.13in} \\
    \st & \vx^+ - \vx^-  = \vt - [\boundary_{d+1}] (\vs^+ - \vs^-) \\
        & \vx^+, \vx^- \geq \vzero,~~\vs^+, \vs^- \geq \vzero 
  \end{array}
\end{equation}

We are interested in instances of this linear program (LP) that have
integer optimal solutions, which hence are optimal solutions for the
original ILP (Equation~\ref{eq:ILPMSFN}) as well. Totally unimodular
matrices yield a prime class of linear programming problems with
integral solutions.  Recall that a matrix is totally unimodular if all
its subdeterminants equal $-1,0,$ or $1$; in particular, each entry is
$-1,0,$ or $1$. The connection between total unimodularity and linear
programming is specified by the following theorem.
\begin{theorem}
\label{thm:IPandTU}
{\rm \cite{VeDa1968}} Let $A$ be an $m \times n$ totally unimodular
matrix, and $\vb \in \Z^m$. Then the polyhedron ${\cal P} = \{ \vx \in
\R^n \, | \, A \vx = \vb, \, \vx \geq \vzero\}$ has integral vertices.
\end{theorem}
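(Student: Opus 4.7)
The plan is to combine the standard characterization of polyhedral vertices as basic feasible solutions with Cramer's rule. A point $\vx^{*} \in {\cal P}$ is a vertex if and only if it is a basic feasible solution: after discarding any redundant rows of $A$ so that $A$ has full row rank $r$, there exists an index set $B \subseteq \{1,\ldots,n\}$ of size $r$ such that the submatrix $A_B$ (the columns of $A$ indexed by $B$) is nonsingular, $\vx^{*}_B = A_B^{-1}\vb$, and $x^{*}_j = 0$ for $j \notin B$. The reduction to the full-row-rank case is routine: dropping redundant equations preserves ${\cal P}$ because they are linear combinations of the retained ones, and it preserves total unimodularity because every subdeterminant of the reduced matrix is also a subdeterminant of $A$.

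With this setup in hand, I would apply Cramer's rule to each component of $A_B^{-1}\vb$. The $i$-th entry equals $\det((A_B)_i)\,/\,\det(A_B)$, where $(A_B)_i$ denotes $A_B$ with its $i$-th column replaced by $\vb$. Since $A$ is totally unimodular and $A_B$ is a nonsingular square submatrix, $\det(A_B) \in \{-1,+1\}$. Expanding $\det((A_B)_i)$ by cofactors along the $\vb$-column expresses it as an integer linear combination of the entries $b_k$, the cofactor coefficients being (up to sign) subdeterminants of $A_B$, which themselves lie in $\{-1,0,+1\}$ by total unimodularity. Because $\vb \in \Z^m$, the numerator is an integer, so $\vx^{*}_B \in \Z^r$. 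Combined with $x^{*}_j = 0$ for $j \notin B$, this gives $\vx^{*} \in \Z^n$, which is exactly what needs to be shown.

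There is no serious obstacle; the argument is in essence a single invocation of Cramer's rule, made possible by the $\{-1,+1\}$ value of $\det(A_B)$. The only point requiring a bit of care is ensuring the vertex-to-basic-feasible-solution correspondence applies cleanly to ${\cal P}$ in the stated standard form, together with the associated rank-reduction step; both are textbook facts from polyhedral theory and introduce no difficulty. Consequently the entire proof reduces to: pick a vertex, identify its basis $A_B$, observe $\det(A_B) = \pm 1$, and conclude integrality from Cramer's formula.
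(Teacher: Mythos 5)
Your argument is correct and complete: the reduction to full row rank, the vertex--basic-feasible-solution correspondence, the observation that a nonsingular square submatrix of a totally unimodular matrix has determinant $\pm 1$, and the Cramer's rule conclusion are exactly the standard proof of this result. Note that the paper itself offers no proof --- it cites the theorem directly from Veinott and Dantzig \cite{VeDa1968} --- and your write-up is essentially that classical argument (one could shorten the cofactor discussion by simply noting that $A_B^{-1} = \mathrm{adj}(A_B)/\det(A_B)$ is an integer matrix), so there is nothing to correct.
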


Notice that the feasible set of the \MSFN{} LP (Equation~\ref{eq:LPMSFN})
has the form specified in the theorem above, with the variable vector
$(\vx^+, \vx^-, \vs^+, \vs^-)$ in place of $\vx$. The corresponding
equality constraint matrix $A$ has the form $\begin{bmatrix} I & -I &
B & -B \end{bmatrix}$, where $I$ is the identity matrix and $B =
[\boundary_{d+1}]$. The input $d$-chain $\vt$ is in place of the
right-hand side vector $\vb$. In order to use Theorem
\ref{thm:IPandTU} for computing the \MSFN{}, we connect the total
unimodularity of constraint matrix $A$ and that of boundary matrix
$B$.

\begin{lemma}
\label{lem:TUconmat}
If $B = [\boundary_{d+1}]$ is totally unimodular, then so is the
matrix $A = \begin{bmatrix} I & -I & B & -B \end{bmatrix}$.
\end{lemma}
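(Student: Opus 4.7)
My plan is to verify total unimodularity of $A = \begin{bmatrix} I & -I & B & -B \end{bmatrix}$ directly from the definition: show that every square submatrix of $A$ has determinant in $\{-1, 0, 1\}$. Fix an arbitrary $k \times k$ submatrix $A'$ of $A$ obtained by selecting a set $R$ of $k$ row indices and $k$ column indices, where the chosen columns split into four index sets $C_I$, $C_{-I}$, $C_B$, $C_{-B}$ according to which of the four blocks they were drawn from.

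First I would dispose of the duplicated-column cases. If the same column index appears in both $C_I$ and $C_{-I}$, then two columns of $A'$ are exact negatives of each other, so $\det(A') = 0$; likewise if the same index appears in both $C_B$ and $C_{-B}$. Otherwise, I multiply each column of $A'$ drawn from the $-I$ or $-B$ blocks by $-1$. This only changes the overall sign of $\det(A')$, so it suffices to analyze the reduced matrix $A''$ whose columns are drawn only from the $I$- and $B$-blocks, showing that $\det(A'') \in \{-1,0,1\}$.

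Next I would use the structure of the $I$-columns to reduce to a subdeterminant of $B$. Each column of $A''$ coming from the $I$-block is a standard basis vector $e_i$ restricted to the rows $R$. If some such $e_i$ has $i \notin R$, then this column is zero in $A''$ and $\det(A'') = 0$. Otherwise, I apply Laplace expansion along each surviving $I$-column: the column has a single nonzero entry equal to $+1$, contributing a sign $\pm 1$ and deleting column and row $i$. Iterating this over all $I$-columns of $A''$ leaves a square submatrix of $B$, whose determinant lies in $\{-1,0,1\}$ by the hypothesis that $B$ is totally unimodular. Combining the sign factors gives $\det(A') \in \{-1,0,1\}$, which is the required conclusion.

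I do not expect any serious obstacle here: the argument is bookkeeping with signs, Laplace expansion along columns with a single nonzero entry, and the observation that negating or duplicating columns of a totally unimodular matrix preserves the property. The only point requiring care is keeping track of which rows remain after each expansion, so that the resulting submatrix is genuinely a square submatrix of $B$ and Theorem~\ref{thm:IPandTU} (via the definition of total unimodularity) applies cleanly.
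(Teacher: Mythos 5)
Your argument is correct and amounts to the same approach as the paper: the paper observes that $A$ is obtained from $B$ by appending negated copies of its columns and unit columns, and cites the standard fact that these operations preserve total unimodularity, whereas you simply prove that fact directly by the duplicate-column/negation/Laplace-expansion bookkeeping. The details check out (in particular, the surviving $I$-columns have distinct indices all lying in the chosen row set, so the iterated expansion does terminate in a genuine square submatrix of $B$), so this is just the cited lemma written out in full.
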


\begin{proof}
Starting with $B$, we get the matrix $A$ by appending columns of $B$
scaled by $-1$ to its right, and appending columns with a single
nonzero entry of $\pm 1$ to its left. Both these classes of operations
preserve total unimodularity \cite[pg.~280]{Schrijver1986}.
\end{proof}

\noindent Consequently, we get the following result on polynomial time
computability of the \MSFN{}.

\begin{theorem}
\label{thm:MSFNpolytime}
If the boundary matrix $[\boundary_{d+1}]$ of the finite oriented
simplicial complex $K$ is totally unimodular, then the multiscale
simplicial flat norm of the set $T$ specified as a $d$-chain $\vt \in
\Z^m$ of $K$ can be computed in polynomial time.
\end{theorem}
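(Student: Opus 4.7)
The plan is to show that the natural LP relaxation~(\ref{eq:LPMSFN}) of the MSFN ILP~(\ref{eq:ILPMSFN}) already has an integer optimum and can be solved in polynomial time, so its solution gives the MSFN exactly.

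First I would apply Lemma~\ref{lem:TUconmat} to conclude that the constraint matrix $A = \begin{bmatrix} I & -I & B & -B \end{bmatrix}$ of~(\ref{eq:LPMSFN}) is totally unimodular, since by hypothesis $B = [\boundary_{d+1}]$ is. Because the right-hand side $\vt \in \Z^m$ is integer, Theorem~\ref{thm:IPandTU} then immediately yields that every vertex of the LP feasible polyhedron is integral. The polyhedron is pointed thanks to the non-negativity constraints $\vx^+, \vx^-, \vs^+, \vs^- \geq \vzero$, and the objective is non-negative and bounded below by zero while the LP is feasible (take $\vs^{\pm}=\vzero$ and split $\vt$ into its positive and negative parts). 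So an optimum is attained at some vertex.

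Next I would invoke a polynomial-time linear programming algorithm to produce a vertex optimum $(\vx^{+*}, \vx^{-*}, \vs^{+*}, \vs^{-*})$ of~(\ref{eq:LPMSFN}) in time polynomial in the input bit length. This is possible because~(\ref{eq:LPMSFN}) has $2(m+n)$ variables and $m$ equality constraints, coefficient matrix entries in $\{-1,0,1\}$, integer right-hand side $\vt$, and rational objective coefficients $w_i, v_j, \lambda \geq 0$ (the rationality assumption is the same one used for \dmsfn). By the previous step the produced vertex is integer, so setting $\vx^* = \vx^{+*} - \vx^{-*} \in \Z^m$ and $\vs^* = \vs^{+*} - \vs^{-*} \in \Z^n$ gives an integer feasible solution to~(\ref{eq:optprobMSFN}); since the LP value is a lower bound on the ILP value, and the ILP value equals the MSFN value by construction of~(\ref{eq:ILPMSFN}), this $(\vx^*, \vs^*)$ is optimal and its objective equals $F^{\lambda}_S(T)$.

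The main obstacle I expect is the requirement that the LP algorithm return a \emph{vertex} rather than an arbitrary optimal point, since only vertices are guaranteed integral by Theorem~\ref{thm:IPandTU}. This is standard but worth addressing explicitly: one either uses a simplex-style termination, or follows Khachiyan's ellipsoid method / an interior-point method by a polynomial-time purification (crossover) step that moves to a basic feasible optimum without changing the objective. A small secondary check is that the absolute-value reformulation $|x_i| \rightsquigarrow x_i^+ + x_i^-$ with $x_i = x_i^+ - x_i^-$ (and analogously for $s_j$) introduces no spurious optima at which both $x_i^+>0$ and $x_i^->0$; this follows from the non-negativity of the objective coefficients $w_i, v_j, \lambda$, as already noted after~(\ref{eq:ILPMSFN}).
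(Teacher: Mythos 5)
Your proposal is correct and follows essentially the same route as the paper: total unimodularity of $A=\begin{bmatrix} I & -I & B & -B\end{bmatrix}$ via Lemma~\ref{lem:TUconmat}, integrality of vertices via Theorem~\ref{thm:IPandTU}, boundedness and pointedness of the polyhedron, and a polynomial-time LP solve followed by a crossover/purification step to land on an integral vertex optimum --- the last point being exactly the subtlety the paper handles by citing G\"uler et al.\ for moving from a (possibly nonintegral, nonunique) interior-point optimum to an optimal vertex. Your additional check that the splitting $\abs{x_i}\rightsquigarrow x_i^+ + x_i^-$ introduces no spurious optima is the same observation the paper makes when justifying the ILP reformulation.
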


\begin{proof}
The problem of computing the \MSFN{} of $T$ (Equation~\ref{eq:MSFNdef})
is cast as the optimization problem given in
Equation~(\ref{eq:optprobMSFN}). This problem is reformulated as an
instance of ILP (Equation~\ref{eq:ILPMSFN}). We get the \MSFN{} LP
(Equation~\ref{eq:LPMSFN}) by relaxing the integrality constraints of
this ILP.  As noted in Remark \ref{rem:MSFNmin}, the optimal cost of
this LP is finite. The polyhedron of this LP has at least one vertex,
given that all variables are nonnegative \cite[Cor.~2.2]{BeTs1997}. 
By Lemma \ref{lem:TUconmat}, the constraint matrix of this LP is
totally unimodular, as $[\boundary_{d+1}]$ is so. Hence by
Theorem~\ref{thm:IPandTU}, all vertices of the feasible region of the \MSFN{}
LP are integral, since $\vt \in \Z^m$.

An optimal solution $( \vx^+_*, \vx^-_*, \vs^+_*, \vs^-_* )$ of the
\MSFN{} LP can be found in polynomial time using an interior point
method \cite[Chap.~9]{BeTs1997}. If it happens to be a unique optimal
solution, then it will be a vertex, and hence will be integral by
Theorem \ref{thm:IPandTU}. Hence it is an optimal solution to the ILP
(Equation~\ref{eq:ILPMSFN}).

If the optimal solution is not unique, then $( \vx^+_*, \vx^-_*,
\vs^+_*, \vs^-_* )$ may be nonintegral. But since the optimal cost is
finite, there must exist a vertex in its polyhedron that has this
minimum cost. Given a nonintegral optimal solution obtained by an
interior point method, one can find such an integral optimal solution
at a vertex in polynomial time \cite{GuHeRoTeTs1993}. Hence the \MSFN{}
ILP can be solved in polynomial time in this case as well.
\end{proof}

\begin{remark}
\label{rem:strgpoly}
We point out that since the boundary matrix $B = [\boundary_{d+1}]$
has entries only in $\{-1,0,1\}$, the constraint matrix of the \MSFN{} LP
(Equation~\ref{eq:LPMSFN}) also has entries only in
$\{-1,0,1\}$. Hence the \MSFN{} LP can be solved in strongly polynomial
time \cite{Tardos1986}, i.e., the time complexity is independent of
the objective function and right-hand side coefficients, and depends
only on the dimensions of the problem.
\end{remark}

\begin{remark}
\label{rem:zeroonesoln}

Components of variables $\vx^+,\vx^-,\vs^+,\vs^-$ in the \MSFN{} ILP
(Equation~\ref{eq:ILPMSFN}) could assume values other than
$\{-1,0,1\}$, indicating integer multiplicities higher than $1$ for
the corresponding simplices in the optimal decomposition. The
definition of \MSFN{} (Equation~\ref{eq:MSFNdef}) does allow such
larger multiplicities. At the same time, if one insists on using each
$(d+1)$-simplex at most {\em once} when calculating the \MSFN{}, and
insists on similar restrictions on $d$-simplices in the optimal
decomposition, we can modify the ILP such that Theorem
\ref{thm:MSFNpolytime} still holds.

Denoting the entire variable vector by $\vx = (\vx^+, \vx^-, \vs^+,
\vs^-) \in \Z^{2m+2n}$, we add the upper bound constraints $\vx \leq
\vone$, where $\vone$ is the $(2m+2n)$-vector of ones. These
inequalities could be converted to the set of equations $\vx + \vy =
\vone$, where $\vy$ is the $(2m+2n)$-vector of slack variables that
are nonnegative. These modifications give an ILP whose polyhedron is
in the same form as described in Theorem \ref{thm:IPandTU}, with the
equations denoted as $A'\vx' = \vb'$ for the variable vector $\vx' =
(\vx,\vy)$. The new constraint matrix $A'$ is related to the
constraint matrix $A$ of the original \MSFN{} ILP given in Lemma
\ref{lem:TUconmat} as 
\[
A' = \begin{bmatrix} A & O \\ I & I \end{bmatrix},
\]
where $I$ is the $2m+2n$ identity matrix, and $O$ is the $m \times
(2m+2n)$ zero matrix. Hence $A'$ is obtained from $A$ by first adding
$2m+2n$ rows with a single nonzero entry of $+1$, and then adding to
the resulting matrix $2n+2m$ more columns with a single nonzero entry
of $+1$. These operations preserve total unimodularity
\cite[pg.~280]{Schrijver1986}, and hence the new constraint matrix
$A'$ is totally unimodular when $[\boundary_{d+1}]$ is so. The new
right-hand side vector $\vb' \in \Z^{3m+2n}$ consists of the input
chain $\vt$ and the vector of ones from the new upper bound
constraints.

\end{remark}

Since the efficient computability of the \MSFN{} depends on the total
unimodularity of the boundary matrix, we study the conditions under
which total unimodularity of boundary matrices can be guaranteed.

\section{Simplicial Complexes and Relative Torsion}

Dey, Hirani, and Krishnamoorthy \cite{DHK2010} have given a simple
characterization of the simplicial complex whose boundary matrix is
totally unimodular. In short, if the simplicial complex does not have
relative torsion then its boundary matrix is totally unimodular. We
state this and other related results here for the sake of
completeness, and refer the reader to the original paper
\cite{DHK2010} for details and proofs. The simplicial complex $K$ in
these results has dimension $d+1$ or higher. Recall that a
$d$-dimensional simplicial complex is {\em pure} \ if it consists of
$d$-simplices and their faces, i.e., there are no lower dimensional
simplices that are not part of some $d$-simplex in the complex.

\begin{theorem} \label{thm:TUreltorfree}
  {\rm \cite[Theorem 5.2]{DHK2010}} The boundary matrix
  $[\boundary_{d+1}]$ of a finite simplicial complex $K$ is totally
  unimodular if and only if $H_d(L,L_0)$ is torsion-free for all pure
  subcomplexes $L_0,L$ of $K$, with $L_0 \subset L$.
\end{theorem}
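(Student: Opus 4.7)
The approach is to identify square submatrices of $[\boundary_{d+1}]$ with relative boundary operators of pairs of pure subcomplexes, and then to relate their determinants to torsion in the relative homology. Recall that an integer matrix is totally unimodular iff every square submatrix has all invariant factors in $\{0, 1\}$ (equivalently, determinant in $\{-1, 0, 1\}$), by the standard $\gcd$-of-minors formula for invariant factors.

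First I would set up a dictionary between square submatrices and pairs $(L, L_0)$. A $p \times p$ submatrix $M$ of $[\boundary_{d+1}]$ is specified by a set $R$ of $d$-simplices (rows) and a set $C$ of $(d+1)$-simplices (columns). Let $L$ be the pure $(d+1)$-subcomplex of $K$ generated by $C$, and let $L_0$ be the pure $d$-subcomplex of $L$ whose top-dimensional simplices are the $d$-faces of simplices in $C$ that lie outside $R$. After discarding the trivial case in which some row of $M$ is identically zero (so that $\det M = 0$ automatically), the set $R$ is a basis of $C_d(L, L_0) = C_d(L)/C_d(L_0)$ and $C$ is a basis of $C_{d+1}(L, L_0) = C_{d+1}(L)$; in these bases the relative boundary operator $\boundary_{d+1}^{L, L_0}$ is represented exactly by $M$. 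Conversely, every pair $(L, L_0)$ of pure subcomplexes with $\dim L = d+1$ and $\dim L_0 = d$ arises in this way, while pairs of other dimensions impose no new torsion constraints (the relative boundary is either zero or already accounted for).

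Next I would connect the invariant factors of $M$ to the torsion of $H_d(L, L_0)$. The quotient $C_d(L, L_0)/Z_d(L, L_0)$ is isomorphic to $\mathrm{image}(\boundary_d^{L, L_0})$, a subgroup of the free abelian group $C_{d-1}(L, L_0)$, hence itself free. The short exact sequence
$0 \to H_d(L, L_0) \to \mathrm{coker}(\boundary_{d+1}^{L, L_0}) \to C_d(L, L_0)/Z_d(L, L_0) \to 0$
therefore splits, giving $\mathrm{coker}(\boundary_{d+1}^{L, L_0}) \cong H_d(L, L_0) \oplus (\text{free})$. Consequently the torsion of the cokernel coincides with the torsion of $H_d(L, L_0)$, which vanishes iff every nonzero invariant factor of the square matrix $M$ equals $1$, iff $\det(M) \in \{-1, 0, 1\}$.

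Combining the two steps yields the theorem. If every such $H_d(L, L_0)$ is torsion-free, then by Step 1 every square submatrix of $[\boundary_{d+1}]$ arises as some $M$, and by Step 2 has $\det(M) \in \{-1, 0, 1\}$, proving total unimodularity. Conversely, if $[\boundary_{d+1}]$ is totally unimodular, then every such $M$ has determinant in $\{-1, 0, 1\}$; by the $\gcd$-of-minors formula all its nonzero invariant factors equal $1$, and Step 2 gives torsion-freeness of $H_d(L, L_0)$. The main obstacle is the bookkeeping in Step 2, namely verifying that the short exact sequence splits and that no torsion is lost in passing from the cokernel of the relative boundary to the relative homology; this rests on the fact that $C_d(L, L_0)/Z_d(L, L_0)$ sits inside a free abelian group and is therefore free, so the torsion of the cokernel is carried entirely by $H_d(L, L_0)$.
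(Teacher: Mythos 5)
The paper does not prove this statement: it is quoted verbatim from Dey, Hirani, and Krishnamoorthy \cite[Theorem 5.2]{DHK2010}, and the authors explicitly refer the reader to that paper for the proof, so there is no in-paper argument to compare against. Your proposal is correct and follows essentially the argument of that reference: square submatrices of $[\boundary_{d+1}]$ are identified with relative boundary operators of pairs of pure subcomplexes $(L,L_0)$ (with zero rows handled separately), and the torsion of $H_d(L,L_0)$ is read off from the invariant factors of that matrix via the Smith normal form, your split short exact sequence being the standard justification that the torsion of the cokernel of $\boundary_{d+1}^{L,L_0}$ equals the torsion of $H_d(L,L_0)$. One small caution on wording in Step 2: for a single singular square integer matrix, $\det M=0$ does \emph{not} imply the nonzero invariant factors are all $1$ (consider $\operatorname{diag}(2,0)$), so the final equivalence must be routed, as you correctly do at the outset, through the gcd-of-minors formula applied to \emph{all} square submatrices of $[\boundary_{d+1}]$ rather than through $\det M$ alone; likewise, the dismissal of pairs $(L,L_0)$ of other dimensions deserves a sentence noting that their relative boundary matrices are still submatrices of $[\boundary_{d+1}]$, so the same cokernel argument applies.
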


These authors further describe situations in which the absence of
relative torsion is guaranteed. The following two special cases
describe simplicial complexes for which the boundary matrix is always
totally unimodular.

\begin{theorem} \label{thm:TUorimfld}
  {\rm \cite[Theorem 4.1]{DHK2010}} The boundary matrix
  $[\boundary_{d+1}]$ of a finite simplicial complex triangulating a
  compact orientable $(d+1)$-dimensional manifold is totally
  unimodular.
\end{theorem}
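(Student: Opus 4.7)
The plan is to bypass Theorem \ref{thm:TUreltorfree} and argue directly from the combinatorial structure of $[\boundary_{d+1}]$, using the coherent orientation furnished by orientability. Since $K$ triangulates a compact orientable $(d+1)$-manifold $M$, I would first fix a coherent orientation of all $(d+1)$-simplices of $K$, i.e., an orientation in which any two $(d+1)$-simplices sharing a common $d$-face induce opposite orientations on that shared face. I would then invoke two standard facts about manifold triangulations: each $d$-simplex $\sigma_i$ of $K$ is a face of at most two $(d+1)$-simplices (of exactly two when $\sigma_i$ lies in the interior of $M$, and of exactly one when $\sigma_i \subset \boundary M$), and when $\sigma_i$ is shared by $\tau_j$ and $\tau_{j'}$ the corresponding entries of $[\boundary_{d+1}]$ are $+1$ and $-1$ in some order.

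Together these observations say that every row of $[\boundary_{d+1}]$ has entries in $\{-1,0,1\}$ with at most one $+1$ and at most one $-1$. Equivalently, each column of $[\boundary_{d+1}]^T$ has the same form, so $[\boundary_{d+1}]^T$ is the signed vertex--edge incidence matrix of a directed multigraph, possibly with half-edges corresponding to boundary $d$-simplices (which can be absorbed by adjoining an auxiliary vertex). Since total unimodularity is preserved under transposition, it suffices to verify TU for this incidence-type matrix.

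The final step is the classical TU argument for incidence matrices of directed graphs, which I would carry out by induction on the order $k$ of an arbitrary square submatrix $A$ of $[\boundary_{d+1}]$. If some column of $A$ is zero, then $\det A = 0$; if some column of $A$ has a single nonzero entry $\pm 1$, Laplace-expand along that column and apply the inductive hypothesis; otherwise every column of $A$ has exactly one $+1$ and one $-1$, so the sum of all rows of $A$ is the zero row and $\det A = 0$. The base case $k=1$ is immediate since every entry of $[\boundary_{d+1}]$ lies in $\{-1,0,1\}$.

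The only delicate point, and what I expect to be the main obstacle, is the coherent-orientation claim: one must justify that the sign of the induced orientation on a shared $d$-face can be read off from local orientations of $M$ on each side, and that coherence of the chosen simplex orientations translates exactly into the opposing-sign condition in $[\boundary_{d+1}]$. This is where orientability and the manifold hypothesis are both used; once it is in hand, the remainder of the argument is purely combinatorial and makes no further topological appeal. Note that the same scheme covers both the closed and the bordered cases uniformly, since boundary $d$-simplices simply produce rows with a single nonzero entry, which poses no difficulty for the induction above.
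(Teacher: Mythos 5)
Your argument is correct, but note that the paper itself does not prove this statement: it is quoted verbatim from \cite[Theorem 4.1]{DHK2010}, and the only proof machinery the paper records for total unimodularity is the general relative-torsion characterization of Theorem~\ref{thm:TUreltorfree}, of which the orientable-manifold case is a corollary. Your direct combinatorial route --- coherently orient the $(d+1)$-simplices, observe that each $d$-simplex is a face of at most two $(d+1)$-simplices so that every row of $[\boundary_{d+1}]$ has at most one $+1$ and at most one $-1$, and then run the classical incidence-matrix induction on the transpose --- is essentially the argument given in the cited source for this special case, and it is sound: the existence of a coherent orientation is exactly what orientability of the triangulated manifold provides, and the ``at most two cofacets'' fact follows from the link of an interior (resp.\ boundary) $d$-simplex being $S^0$ (resp.\ a point). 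What your route buys is a self-contained, elementary proof that avoids any homological input; what it does not give is the full strength of Theorem~\ref{thm:TUreltorfree}, which also covers non-manifold complexes without relative torsion.

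One small step you should make explicit: the theorem asserts total unimodularity of $[\boundary_{d+1}]$ for \emph{whatever} orientations the simplices of $K$ happen to carry, whereas your row-sign structure holds only after you re-orient the $(d+1)$-simplices coherently. Re-orienting a $(d+1)$-simplex multiplies the corresponding column of $[\boundary_{d+1}]$ by $-1$ (and re-orienting a $d$-simplex multiplies a row by $-1$), and scaling rows or columns by $-1$ preserves total unimodularity, so the reduction is harmless --- but it is a needed sentence, not a triviality to omit. With that added, and with the half-edge rows (single nonzero $\pm 1$) handled as you indicate, the induction on square submatrices closes the proof.
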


\begin{theorem} \label{thm:TUembedinRdp1}
  {\rm \cite[Theorem 5.7]{DHK2010}} The boundary matrix
  $[\boundary_{d+1}]$ of a finite simplicial complex embedded in
  $\R^{d+1}$ is totally unimodular.
\end{theorem}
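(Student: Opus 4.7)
The plan is to show directly that, after a suitable consistent reorientation of the $(d+1)$-simplices, the boundary matrix $[\boundary_{d+1}]$ has at most two nonzero entries per row and, when two, of opposite signs, and to deduce total unimodularity from this graph-incidence-like structure. I would first establish the local structural claim that each $d$-simplex of $K$ is a face of at most two $(d+1)$-simplices of $K$. Since $|K|$ is embedded in $\R^{d+1}$, a small neighborhood of any interior point of a $d$-simplex $\sigma$ is separated by the hyperplane containing $\sigma$ into two half-spaces. Any $(d+1)$-simplex of $K$ containing $\sigma$ must, near that point, occupy one of these half-spaces, and disjointness of interiors of simplices in the embedding prevents two top simplices of $K$ from occupying the same side. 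Hence at most one top simplex lies on each side of $\sigma$.

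Next I would pin down the sign structure. Total unimodularity is invariant under multiplying rows or columns by $\pm 1$, so I may reorient the $(d+1)$-simplices freely. Fix an ambient orientation on $\R^{d+1}$ and orient every $(d+1)$-simplex of $K$ compatibly. When two top simplices $\tau_1, \tau_2$ share a $d$-face $\sigma$, they lie on opposite sides of $\sigma$, and their ambient-compatible orientations induce opposite orientations on $\sigma$ as a boundary face; hence the two nonzero entries in the row of $\sigma$ of the reoriented $[\boundary_{d+1}]$ are $+1$ and $-1$. Total unimodularity then follows by induction on the size $k$ of a square submatrix: either some row of the submatrix has at most one nonzero entry, in which case one expands the determinant along it, or every row of the submatrix has exactly one $+1$ and one $-1$, in which case the all-ones column vector lies in the kernel of the submatrix, forcing its determinant to be zero. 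Every subdeterminant therefore belongs to $\{-1,0,1\}$, and $[\boundary_{d+1}]$ is totally unimodular.

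The main obstacle is the local structural step: one must turn the topological assumption that $|K|$ embeds in $\R^{d+1}$ into a rigid combinatorial statement about how many top simplices of $K$ can share a given $d$-face. Without this at-most-two conclusion, the clean sign-pattern argument of the second step has no matrix structure to act on, and one would be forced into a much less elementary route, for example combining Alexander duality in $S^{d+1}$ with Theorem \ref{thm:TUreltorfree} to verify torsion-freeness of $H_d(L,L_0)$ for every pair of pure subcomplexes $L_0 \subseteq L$ of $K$.
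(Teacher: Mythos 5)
Your proof is correct, but it takes a genuinely different route from the paper: the paper gives no proof of this statement at all, importing it verbatim from Dey, Hirani, and Krishnamoorthy \cite{DHK2010}, where it is obtained from the relative-torsion characterization (Theorem \ref{thm:TUreltorfree}) by verifying that $H_d(L,L_0)$ is torsion-free for all pure subcomplexes $L_0 \subset L$ of a complex embedded in $\R^{d+1}$ --- essentially the ``less elementary route'' you flag at the end of your proposal. Your argument instead works directly on the matrix: the embedding forces each $d$-simplex to have at most two $(d+1)$-cofaces (one per side of its supporting hyperplane, since distinct top simplices have disjoint interiors), the ambient orientation of $\R^{d+1}$ lets you flip column signs --- which preserves total unimodularity --- so that each row with two nonzeros contains one $+1$ and one $-1$, and the resulting digraph-incidence structure is totally unimodular by the standard Poincar\'e induction (isolate a row with at most one nonzero and expand, or observe that the all-ones vector kills a square submatrix whose rows all sum to zero). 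What your approach buys is a short, self-contained, purely combinatorial proof with no relative homology; what it gives up is generality, since the torsion criterion of Theorem \ref{thm:TUreltorfree} also yields the orientable-manifold and M\"obius-free cases (Theorems \ref{thm:TUorimfld} and \ref{thm:TUnomobius}), where no ``at most two cofaces'' structure is available. One small caution: your half-space argument uses that $K$ is affinely realized in $\R^{d+1}$, so that a top simplex meets a small ball about an interior point of $\sigma$ in exactly a half-ball; for a merely topological embedding the at-most-two-cofaces claim still holds but requires a local homology argument in place of the hyperplane picture.
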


\noindent For simplicial complexes of dimension $2$ or lower, the
boundary matrix is totally unimodular when the complex does not have a
M\"obius subcomplex.

\begin{theorem} \label{thm:TUnomobius}
  {\rm \cite[Theorem 5.13]{DHK2010}} For $d \leq 1$, the boundary
  matrix $[\boundary_{d+1}]$ is totally unimodular if and only if the
  finite simplicial complex has no $(d+1)$-dimensional M\"obius
  subcomplex.
\end{theorem}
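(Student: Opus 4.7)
The plan is to reduce both directions to Theorem~\ref{thm:TUreltorfree}, which says $[\boundary_{d+1}]$ is totally unimodular if and only if $H_d(L, L_0)$ is torsion-free for every pair of pure subcomplexes $L_0 \subset L$ of $K$. Thus the goal is: for $d \leq 1$, the existence of relative torsion in $H_d$ is equivalent to $K$ containing a $(d+1)$-dimensional M\"obius subcomplex. I would first dispatch $d = 0$ quickly: a $1$-dimensional M\"obius subcomplex is vacuous by dimension, and $H_0(L,L_0)$ is always a free abelian group (a direct sum of $\Z$'s, one per component of $L$ not meeting $L_0$), so the equivalence holds trivially. This matches the classical fact that the signed vertex--edge incidence matrix of an oriented graph, whose every column has exactly one $+1$ and one $-1$, is totally unimodular.

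For $d = 1$, the forward direction is by direct computation. Suppose $M \subset K$ is a triangulated M\"obius band; set $L = M$ and $L_0 = \boundary M$, both pure. The long exact sequence of the pair contains
\[
H_1(\boundary M) \xrightarrow{\,i_*\,} H_1(M) \to H_1(M, \boundary M) \to H_0(\boundary M) \to H_0(M),
\]
with $H_1(\boundary M) \cong \Z$, $H_1(M) \cong \Z$ generated by the core circle, $i_*$ equal to multiplication by $2$ because the boundary circle double-covers the core, and the rightmost map an isomorphism $\Z \to \Z$. It follows that $H_1(M, \boundary M) \cong \Z/2\Z$, a $2$-torsion group, so by Theorem~\ref{thm:TUreltorfree} the matrix $[\boundary_2]$ is not totally unimodular.

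The reverse direction is the main obstacle. Starting from pure subcomplexes $L_0 \subset L$ with torsion in $H_1(L, L_0)$, I would extract an embedded M\"obius subcomplex. Pick a $2$-chain $c \in C_2(L)$ and a relative $1$-cycle $z$ with $\boundary c = k\, z + b$, where $k \geq 2$, $b \in C_1(L_0)$, and $[z]$ is a nontrivial torsion class. The plan is to form the dual graph of the support of $c$, placing one vertex per triangle carrying a nonzero coefficient and one edge per internal $1$-simplex shared by two such triangles. Color each dual edge by whether the induced orientations of the two triangles (read off from the signs of the coefficients in $c$) agree or disagree. The failure of $c$ to admit a globally consistent orientation compatible with $\boundary c \in C_1(L_0) + 2\,C_1(L)$ forces an odd-colored cycle in this dual graph, and a thin neighborhood of such a cycle in the support of $c$, after pruning, yields a triangulated M\"obius band in $K$.

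The principal technical difficulty is precisely this last extraction step: ensuring that the combinatorial cycle produced actually bounds a genuine triangulated M\"obius band as opposed to, say, a pinched surface, a self-touching strip, or an annulus with extraneous triangles. The bookkeeping requires iteratively simplifying $c$ (eliminating triangles whose coefficients cancel on every edge, splitting off disconnected pieces, and restricting to a minimal non-orientable part) until the support of the residual chain embeds cleanly. Finiteness of $K$ guarantees termination, but the delicate argument is verifying that no simplification destroys the $2$-torsion class we started with, so that the final residual support must be non-orientable and carries a M\"obius subcomplex.
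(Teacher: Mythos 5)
First, a point of orientation: the paper you are working from does not prove this statement at all --- it is quoted verbatim from \cite[Theorem 5.13]{DHK2010}, and the authors explicitly refer the reader to that paper for the proof. So your proposal can only be measured against the source, not against anything in this manuscript. Against that standard, your easy direction is essentially right but fragile, and your hard direction has a genuine gap. On the forward direction: you assume that a ``$2$-dimensional M\"obius subcomplex'' is a triangulated M\"obius band, i.e.\ an embedded surface with boundary, so that $H_1(M)\cong\Z$, the boundary circle double-covers the core, and the long exact sequence yields $H_1(M,\boundary M)\cong\Z/2\Z$. In \cite{DHK2010} the M\"obius subcomplex is defined purely combinatorially, as a cyclic sequence of $(d+1)$-simplices in which consecutive ones share a $d$-face and the induced orientations fail to be globally consistent; such a configuration need not be an embedded manifold-with-boundary (triangles in the cycle may share extra vertices or edges), and then your homology computation does not literally apply. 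The robust route, and the one that matches the source, is to bypass Theorem~\ref{thm:TUreltorfree} here entirely: the square submatrix of $[\boundary_2]$ whose columns are the triangles of the cycle and whose rows are the shared edges has exactly two nonzero entries per row and column, and its determinant is $\pm\prod(\pm1)\pm\prod(\pm1)=\pm2$ precisely when the orientation product around the cycle is $-1$. That kills total unimodularity directly, with no hypothesis that the subcomplex embeds as a surface.

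The reverse direction is where the theorem actually lives, and your proposal does not close it. You correctly identify the right combinatorial object (the dual graph on the triangles in the support of a $2$-chain witnessing torsion, with edges $2$-colored by orientation agreement, and an odd cycle as the candidate M\"obius subcomplex), but the step from ``odd cycle in the dual graph'' to ``$(d+1)$-dimensional M\"obius subcomplex'' is exactly the delicate extraction you flag and then leave open: you must prune to a cyclic sequence of triangles in which consecutive members share a $d$-face, no non-consecutive members do, and the orientation inconsistency survives the pruning; and you must show that the iterative simplifications (cancelling coefficients, discarding components) cannot destroy the $2$-torsion before such a cycle is isolated. Your appeal to ``finiteness of $K$ guarantees termination'' establishes only that the process stops, not that it stops at a M\"obius subcomplex rather than at an orientable configuration. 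Absent that argument --- which is the technical heart of \cite[Theorem 5.13]{DHK2010} and proceeds there through a structural analysis of minimal non-unimodular submatrices of $[\boundary_2]$ --- the ``only if'' half of the equivalence is asserted, not proved. Your $d=0$ case is fine, though the reason there is no $1$-dimensional M\"obius subcomplex is that every cycle of edges is orientable, not that the notion is ``vacuous by dimension.''
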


It is appropriate to mention here that the connection between total
unimodularity of boundary matrices and torsion in the complex has been
observed as early as in 1895 by
Poincar\'{e}\cite{Poincare2010}. However, the result in~\cite{DHK2010}
connecting the total unimodularity with relative torsion is different
and has led to a polynomial time algorithm for the OHCP
problem. Notice that a complex can be torsion-free, but have
non-trivial relative torsion. The M\"{o}bius strip is such an example.

We illustrate the implications of the results above for the efficient
computation of the \MSFN{} by considering certain sets. When the input
set $T$ is of dimension 1, and is described on an orientable
$2$-manifold to which the choices of $2$-dimensional set $S$ are also
restricted, we can always compute its \MSFN{} by solving the \MSFN{}
LP (Equation~\ref{eq:LPMSFN}) in polynomial time.  A similar result
holds when $T$ is a set of dimension $2$ described as a subcomplex of
a $3$-complex sitting in $\R^3$. For a $1$-dimensional set $T$ with
choices of $S$ restricted to a $2$-complex $K$, we can always compute
the \MSFN{} of $T$ efficiently as long as $K$ does not have a
$2$-dimensional M\"obius subcomplex. Notice that $K$ itself need not
be embedded in $\R^3$ for this result to work -- it could be sitting
in some higher dimensional space.

\section{Simplicial Deformation Theorem} \label{sec:simpdefthm}

When can we use the multiscale simplicial flat norm as a discrete
surrogate for the traditional flat norm?  That is, if we wish to solve
a flat norm problem (for which there are no practical algorithms in
general), can we discretize the problem and find a problem close
enough to the original one which we can solve?

The deformation theorem \cite[Sections 4.2.7--9]{Federer1969} is one
of the fundamental results of geometric measure theory, and more
particularly of the theory of currents. It approximates an integral
current by deforming it onto a cubical grid of appropriate mesh
size. On the other hand, we have been studying currents or sets in the
setting of simplicial complexes, rather than on square grids. Our
proof is a substantial modification of the classical proof of the
deformation theorem. We found the presentation of the latter proof by
Krantz and Parks \cite[Section 7.7]{KrantzParks2008} especially
helpful. Our proof mimics their proof when possible. The gist of this
theorem is the assertion that we may approximate a current with a
simplicial current.

Recall that $\vol_d(\sigma)$ denotes the $d$-dimensional volume of a
$d$-simplex $\sigma$. The {\em perimeter} of $\sigma$ is the set of
all its $(d-1)$-dimensional faces, denoted as $\perimeter(\sigma) = \{
\cup_j \tau_j \,|\, \tau_j \in \sigma, \dim(\tau_j) = d-1\}$. We will
also refer to the $(d-1)$-dimensional volume of $\perimeter(\sigma)$
as the perimeter of $\sigma$, but denote it as $\per(\sigma) =
\sum_{\tau_j \in \perimeter(\sigma)} \vol_{d-1}(\tau_j)$. We let
$\diam(\sigma)$ be the diameter of $\sigma$, which is the largest
Euclidean distance between any two points in $\sigma$.

\begin{theorem}[{\bf Simplicial Deformation Theorem}] \label{thm:simpldeform}
  Let $K$ be a $p$-dimensional simplicial complex embedded in $\R^q$,
  with $p=d+k$ for $k \geq 1$ and $q \geq p$. Suppose that for every
  simplex $\sigma \in K$
  \[ \frac{ \diam(\sigma) \per(\sigma)} {\vol_d({\cal B_\sigma})} \leq
  \upkappa_1 < \infty,\]
  \[ \frac{\diam(\sigma)}{r_{\sigma}} \leq \upkappa_2 < \infty,\] and
  \[\diam(\sigma) \leq \Updelta\] 
  hold, where $\hat{\cal B}_\sigma$ is the largest ball inscribed in
  $\sigma$, ${\cal B_\sigma}$ is the ball with half the radius and
  same center as $\hat{\cal B}_\sigma$, and $r_{\sigma}$ is the radius
  of ${\cal B_\sigma}$. Let $T$ be a $d$-dimensional current in $\R^q$
  such that the support of $T$ is a subset of the underlying
  space of $K$. Suppose that $T$ satisfies
  \[ \mass(T) + \mass(\boundary T) < \infty.\] 
  Then there exists a simplicial $d$-current $P$ supported in the
  $d$-skeleton of $K$ whose boundary $\boundary P$ is supported in the
  $(d-1)$-skeleton of $K$ such that
  \[ T-P = Q + \boundary R,\] 
  and the following controls on mass $M$ hold: 
  \begin{align}
    \mass(P) & \leq (4\upvartheta_K)^{k} \mass(T)+ \Updelta(4\upvartheta_{K})^{k+1}\mass(\boundary T),
    \label{eq:simpdefthmMP}\\
    \mass(\boundary P) & \leq (4\upvartheta_K)^{k+1} \mass(\boundary T), 
    \label{eq:simpdefthmMbdyP}\\
    \mass(R) & \leq \Updelta(4\upvartheta_K)^{k} \mass(T), \mbox{ and}
    \label{eq:simpdefthmMR} \\
    \mass(Q) & \leq  \Updelta(4\upvartheta_{K})^{k}(1+4\upvartheta_{K})\mass(\boundary T), 
    \label{eq:simpdefthmMQ}
  \end{align}
  where $\upvartheta_K = \upkappa_1 + \upkappa_2$.
\end{theorem}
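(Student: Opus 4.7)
The plan is to mimic the classical deformation theorem (following the presentation of Krantz--Parks, \S7.7), replacing the canonical retraction of a cube onto its boundary by a radial projection inside each simplex from a carefully chosen center. Starting with $T$ supported in the $p$-skeleton $|K|$, I would push the current successively onto the $(p{-}1)$-skeleton, then the $(p{-}2)$-skeleton, and so on down to the $d$-skeleton, accumulating the homotopy errors as pieces of $Q$ and $R$ at each stage. After $k = p - d$ rounds, the resulting $d$-current $P$ sits on the $d$-skeleton; since $P$ is a $d$-chain, $\boundary P$ then automatically lies in the $(d{-}1)$-skeleton.

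The workhorse is a single-simplex pushdown lemma. For a $j$-simplex $\sigma \in K$ and any $d$-current $T_\sigma$ supported in $\sigma$, I would select a center $c \in {\cal B}_\sigma$ and deploy the radial projection $\pi_c : \sigma \setminus \{c\} \to \boundary\sigma$ together with its straight-line homotopy $h_c$ from the identity to $\pi_c$. The homotopy formula for currents then yields
\begin{equation*}
T_\sigma - \pi_{c\sharp} T_\sigma \;=\; \boundary(h_{c\sharp} T_\sigma) \,+\, h_{c\sharp}(\boundary T_\sigma),
\end{equation*}
so that the first summand contributes to $R$ and the second to $Q$. The goal is to choose $c$ so that, roughly,
\begin{align*}
\mass(\pi_{c\sharp} T_\sigma) & \leq 4\upvartheta_K\, \mass(T_\sigma) + 4\upvartheta_K \Updelta\, \mass(\boundary T_\sigma), \\
\mass(h_{c\sharp} T_\sigma) & \leq \Updelta\, \mass(T_\sigma),
\end{align*}
together with the analogous bound $\mass(\pi_{c\sharp} \boundary T_\sigma) \leq 4\upvartheta_K \mass(\boundary T_\sigma)$. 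Such a $c$ is produced by averaging: the Jacobian of $\pi_c$ at $x$ on a $d$-plane scales like $(\operatorname{dist}(x,\boundary\sigma)/|x-c|)^{d}$ modulated by the angle between the plane and the ray through $c$, so the center-integral of the expansion reduces via Fubini to a quantity controlled by $\per(\sigma)\diam(\sigma)/\vol_d({\cal B}_\sigma) \leq \upkappa_1$ and $\diam(\sigma)/r_\sigma \leq \upkappa_2$. A Chebyshev step then extracts a center at which the expansion exceeds the $c$-average by at most a factor of $4$, yielding the overall constant $4\upvartheta_K = 4(\upkappa_1+\upkappa_2)$.

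With the lemma in hand, the global push-down is assembled by picking centers independently in each open $j$-simplex and gluing the projections along shared faces, which is consistent because $\pi_c$ fixes $\boundary\sigma$ pointwise. Summing the single-simplex inequalities over all $j$-simplices gives coupled recurrences for $\mass(T^{(j)})$ and $\mass(\boundary T^{(j)})$, each multiplying by $4\upvartheta_K$ at every step and feeding in additional homotopy contributions from the boundary. Iterating these $k$ times and collapsing the resulting geometric series yields the stated bounds (\ref{eq:simpdefthmMP}) and (\ref{eq:simpdefthmMbdyP}); applying the same iteration to the homotopy inequality, with $R = \sum_j h_{c\sharp} T^{(j)}$ and $Q = \sum_j h_{c\sharp}(\boundary T^{(j)})$ assembled from the stage-wise chains, delivers (\ref{eq:simpdefthmMR}) and (\ref{eq:simpdefthmMQ}). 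The extra factor of $\Updelta$ in the $R$ and $Q$ bounds accounts for the diameter of the homotopy cylinder.

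The hard part will be the averaging estimate inside a single simplex. The classical cubical argument exploits the product structure of the cube to compute the center-integral of the Jacobian exactly, but a simplex admits no such decomposition. I would instead slice $\sigma$ into cones from $c$ over its $(j{-}1)$-faces $\tau \subset \boundary\sigma$ and bound the Jacobian ray-by-ray; Fubini in the center-variable then separates into a face sum weighted by solid angles, which is precisely what the perimeter-to-volume hypothesis $\diam(\sigma)\per(\sigma)/\vol_d({\cal B}_\sigma) \leq \upkappa_1$ controls, while the aspect-ratio hypothesis $\diam(\sigma)/r_\sigma \leq \upkappa_2$ ensures that cones based at an arbitrary $c \in {\cal B}_\sigma$ behave uniformly relative to the cone from the incenter. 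Verifying the Jacobian bound and the Fubini swap requires care with the slicing theory for rectifiable currents and the integral representation of $h_{c\sharp}$, but no new ideas beyond those in the classical proof.
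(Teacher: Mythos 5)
Your overall strategy coincides with the paper's: radial retraction from a center inside each simplex, an averaged Jacobian estimate over centers in ${\cal B}_\sigma$ obtained by a cone decomposition and Fubini, a Chebyshev selection (with the factor $4$) of a center that controls the expansion of $T$ and $\boundary T$ simultaneously, and $k$ iterations glued by the homotopy formula. The one genuine gap is your assertion that once $P$ sits on the $d$-skeleton, ``$\boundary P$ then automatically lies in the $(d{-}1)$-skeleton.'' After $k$ rounds the pushforward $\psi^1_{\#}(T)$ is a $d$-current \emph{supported} in the $d$-skeleton, but it is not yet known to be a simplicial $d$-chain; establishing that would require a constancy-theorem (Sobolev-type) argument, which in turn needs $\boundary(\psi^1_{\#}T)=\psi^1_{\#}(\boundary T)$ to already lie in the $(d{-}1)$-skeleton --- and it does not: $\boundary T$ is a $(d{-}1)$-current that requires $k+1$ retraction steps, not $k$, so after $k$ steps it lies only in the $d$-skeleton. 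The paper closes this gap by deliberately avoiding the constancy argument: it applies one further retraction to $\psi^1_{\#}(\boundary T)$ and sets $P = \psi^1_{\#}(T) - h_{\#}([0,1]\times\psi^1_{\#}(\boundary T))$, where $h$ is the homotopy of that last step, so that $\boundary P = \psi^2_{\#}(\boundary T)$. This correction is precisely the source of the exponent $k+1$ in the bound for $\mass(\boundary P)$, of the extra term $\Updelta(4\upvartheta_K)^{k+1}\mass(\boundary T)$ in the bound for $\mass(P)$, and of the factor $(1+4\upvartheta_K)$ in the bound for $\mass(Q)$ (your $Q$ must absorb this correction as a second piece $Q_2$ in addition to the homotopy errors from the descent of $T$). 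As written, your outline cannot produce these terms because it never accounts for moving $\boundary P$ off the $d$-skeleton.

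A smaller point: your single-simplex lemma inserts a term $4\upvartheta_K\Updelta\,\mass(\boundary T_\sigma)$ into the bound for $\mass(\pi_{c\#}T_\sigma)$ at every level. No such term is needed there --- the pushforward mass is controlled by $\int J_d\phi\,{\rm d}\norm{T_\sigma}$ alone --- and if it were carried through the iteration it would accumulate a sum over the $k$ stages rather than collapsing to the single term $\Updelta(4\upvartheta_K)^{k+1}\mass(\boundary T)$ appearing in the statement. In the paper the boundary of $T$ enters the bound on $\mass(P)$ exactly once, through the final correction described above. With these two repairs your outline matches the paper's proof.
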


\begin{remark}
\label{rem:defthm1}
  It is immediate that the flat norm distance between $T$ and $P$ can
  be made arbitrarily small by subdividing the simplicial complex to
  reduce $\Updelta$ while preserving the regularity of the refinement
  as measured by $\upkappa_1$ and $\upkappa_2$.
\end{remark}
\begin{remark}
  Note that this theorem combines the unscaled and scaled versions of
  the original deformation theorem \cite[Theorems 7.7.1 and
  7.7.2]{KrantzParks2008} into one theorem through the explicit form
  of the constraints. In our proof of Theorem \ref{thm:simpldeform},
  we replace certain pieces of the original proof as presented by
  Krantz and Parks~\cite[Pages 211--222]{KrantzParks2008} without
  reproducing all the other details of their proof. We found their
  exposition quite well-structured, making it easier to identify the
  modifications needed to get our theorem.
\end{remark}
\begin{remark}
  The bound for $\mass(P)$ in Theorem \ref{thm:simpldeform} is larger
  than the classical bound. We get this large bound because we
  generate $P$ through retractions alone, and not using the usual
  Sobolev-type estimates \cite[Pages 220--222]{KrantzParks2008}. And
  of course, the $\Updelta$ in the coefficient of the extra term means
  that it becomes unimportant as the simplicial complex is
  appropriately subdivided.
\end{remark}

\subsection{Proof of the Simplicial Deformation Theorem}

At the heart of the modification of the deformation theorem (from
cubical grid to simplicial complex settings) is the recalculation of
an integral over the current and its boundary. This integral appears
in a bound on the {\em Jacobian of the retraction}, which measures the
expansion in mass of the current resulting from the process of
retracting it on to the simplices of the simplicial complex. To do
this recalculation, we consider the retraction $\phi$ one step at a
time, building it through independent choices of centers to project
from in every simplex and its every face.

We first describe the general set up of retraction within a
simplex. We then present certain bounds on the mass expansion
resulting from the retraction in Lemmas \ref{lem:bnddjacob},
\ref{lem:bndJxfixa}, and \ref{lem:bndJafixx}. In particular, we obtain
bounds on the expansion that are independent of the choice of points
from which we project. These bounds are independent of the particular
current that we retract on to the simplicial complex. But we employ
these bounds to subsequently bound the overall expansion of mass of
the current resulting from the retraction.

\subsubsection{Retracting from a center inside a simplex} \label{sssec:retrsetup}

  We describe the details of retraction for an $\ell$-simplex $\sigma$
  in the $p$-dimensional simplicial complex $K$. This set up is valid
  for any $\ell$, but in particular, we will use the bounds thus
  obtained for $d \leq \ell \leq p$ when retracting a $d$-current onto
  $K$. We pick a center $\va \in \intrr(\sigma)$, the interior of
  $\sigma$, and project every $\vx \in \intrr(\sigma) \setminus
  \{\va\} $ along the ray $(\vx-\va)/\norm{\vx-\va}$ to
  $\perimeter(\sigma)$. Denoting this map as $\phi(\vx,\va)$, we get
  \begin{equation} \label{eq:phixa}
  \phi(\vx,\va) = (\phi_{\pi} \circ \phi_{\delta})(\vx,\va),
  \end{equation}

  where $\phi_{\delta}(\vx,\va)$ is a dilation of $\R^\ell$ by the
  factor $\norm{\phi(\vx,\va)-\va}/\norm{\vx-\va}$ and
  $\phi_{\pi}(\vx, \va)$ is a nonorthogonal projection along $(\vx -
  \va)/ \norm{\vx-\va}$ onto $\tau_{\vx}$, the $(\ell-1)$-dimensional
  face of $\sigma$ containing $\phi(\vx,\va)$. We denote $\hat{r} =
  \norm{\phi(\vx,\va) - \va}$ and $r = \norm{\vx-\va}$. Let $E_{\ell}$
  be the $\ell$-hyperplane that contains $\sigma$ and $E_{\ell-1}$ the
  $(\ell-1)$-hyperplane that contains $\tau_{\vx}$. Denote the
  orthogonal projection of $\va$ onto $E_{\ell-1}$ by $\vb$, and let
  $\hat{h} = \norm{\vb - \va}$. For any point $\vy = \va + (\vb - \va)
  \gamma$ with $0 < \gamma < 1$, we get $\phi(\vy,\va) = \vb$. In
  particular, we consider the point of intersection of line connecting
  $\va$ and $\vb$ with the $(\ell-1)$-hyperplane parallel to
  $\tau_{\vx}$ that contains $\vx$. Naming this point $\vy$, we define
  $h = \norm{\vy - \va}$. Let $\vz \in E_{\ell}$ denote either normal
  to $\tau_{\vx}$ at $\phi(\vx,\va)$ (either of the two possibilities
  work). Let $\vv_2 = (\vx - \va)/ \norm{\vx-\va}$, and let $\vv_1$ be
  the vector in $\spn ( \vz, \vv_2 )$ that is normal to $\vv_2$ and
  points into $\sigma$. We illustrate this construction on a
  $3$-simplex in Figure~\ref{fig:jacob}, where the cone of $\va$ with
  face $\tau$ is shown in red and the other points and vectors are
  labeled. We also illustrate the corresponding slice spanned by
  $\vv_1$ and $\vv_2$ in Figure~\ref{fig:2d-diagram}.
\begin{figure}[htp!]
  \centering
  \includegraphics[scale=1, trim=1in 7.2in 3in 1in, clip]{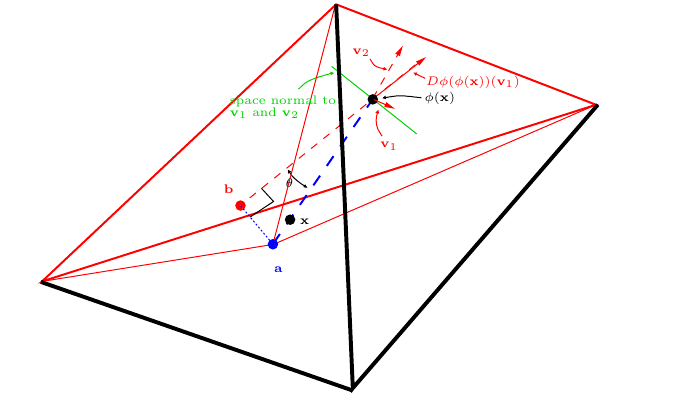}
  \caption{Illustration of the dilation and nonorthogonal projection
  involved in retraction for a $3$-simplex.}  
  \label{fig:jacob}
  \end{figure}
  \begin{figure}[ht!]
  \centering
  \includegraphics[scale=0.9, trim=1in 6.9in 2.2in 1in, clip]{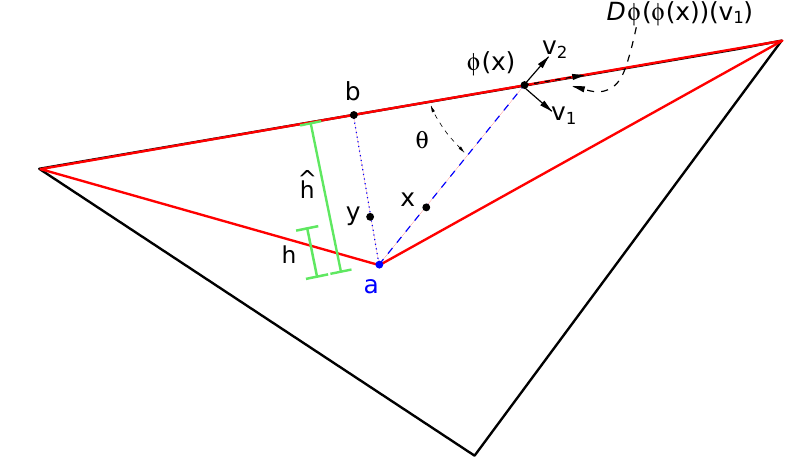}
  \caption{A $2$-dimensional illustration of the dilation
  calculation.}
  \label{fig:2d-diagram} 
  \end{figure}

  Choose an orthogonal basis $\{\vw_1,...,\vw_{\ell-2}\}$ for
  $\spn(\vv_1,\vv_2)^{\perp}$.  Note that $\spn(\vw_1, ...,$
  $\vw_{\ell-2}) \subset E_{\ell-1}$. Let $\vw'$ be a unit vector in
  $\spn(\vw_1,...,\vw_{\ell-2})^{\perp}\cap E_{\ell-1}$ parallel
  to $\,\phi(\vx) - \vb$. Then $\{\vv_1, \vw_1, ..., \vw_{\ell-2},$
  $\vv_2\}$ is an orthogonal basis for $\R^{\ell}$, and $\phi_{\pi}$
  is given by
  \begin{equation} \label{eq:orthbasisRl}
     \begin{array}{rcl}		
     \phi_{\pi}(\vv_1) & = &  \alpha \vw',\\
     \phi_{\pi}(\vw_i) & = & \vw_i, \; i \in \{1,...,\ell-2\},~~\mbox{ and }\\
     \phi_{\pi}(\vv_2) & = & 0, 
     \end{array}
  \end{equation}
  where $\alpha = \hat{r}/\hat{h}$. Notice that the above set up works
  everywhere except when $\phi(\vx) = \vb$, in which case we obtain an
  orthogonal projection for $\phi_{\pi}(\vx)$ along
  $\vb-\va$. Choosing coordinates for the tangent spaces of $\sigma$
  and $\tau_{\vx}$ to be $\{\vv_1, \vw_1, ..., \vw_{\ell-2}, \vv_2\}$
  and $\{\vw', \vw_1,..., $ $\vw_{\ell-2}\}$, respectively, we get
  from Equation~(\ref{eq:orthbasisRl}) that $D \phi_{\pi}(\vx,\va)$ is
  the $(\ell-1) \times \ell$ matrix given as
  \begin{equation} \label{eq:Dphimatrix}
  D\phi_{\pi}(\vx,\va) = 
   \begin{bmatrix} 
    \alpha & 0 & 0 & ... & 0 & 0 \\
	 0 & 1 & 0 & ... & 0 & 0 \\ 
    \vdots & &\ddots & & \vdots & \vdots \\ 
      0 & \hdots  & & & 1 & 0
    \end{bmatrix}. 
  \end{equation}
%
\subsubsection{Bounding the Integral of the Jacobian}

We now present a series of bounds on integrals of the dilation of
$d$-volumes induced by the retraction. Since $\ell=d$ implies we are
already in the $d$-skeleton and no retraction is needed, we can assume
that $\ell > d$. We start with a bound on the maximum dilation of
$d$-volumes under the retraction $\phi$. $D\phi$ will denote the
\emph{tangent map} or \emph{Jacobian map} of $\phi$.
\begin{definition} 
  Let $J_d\phi(\vx,\va)$ be the maximum dilation of $d$-volumes
  induced by $D\phi(\vx,\va)$ at $\vx$.
\end{definition}

We will use the definitions and results on $D\phi_{\pi}(\vx,\va)$ in
$\ell$-dimension given above. In particular, recall that
$\diam(\sigma)$ is the diameter of $\sigma$, $\hat{h} = \|b-a\|$ and $h = \|y-a\|$.

  \begin{lemma} \label{lem:bnddjacob} For any center $\va$ and any
  point $\vx \neq \va$ in the $\ell$-simplex $\sigma$ with $d < \ell
  \leq p=d+k$,
  \[ J_d \phi(\vx,\va) \leq \left(\frac{\hat{h}}{h}\right)^d
  \frac{\diam(\sigma)}{\hat{h}}. \]
  \end{lemma}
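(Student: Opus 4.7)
The plan is to exploit the factorization $\phi = \phi_\pi \circ \phi_\delta$ recorded in Equation~(\ref{eq:phixa}) and estimate the $d$-Jacobian of each factor separately. The dilation $\phi_\delta$ acts on $\R^\ell$ as multiplication by the scalar $\hat{r}/r$, so $D\phi_\delta = (\hat{r}/r)\,I$ contributes exactly $(\hat{r}/r)^d$ to the dilation of any $d$-volume. Consequently $J_d\phi(\vx,\va) = (\hat{r}/r)^d\,J_d\phi_\pi(\vx,\va)$, and the whole task reduces to bounding $J_d\phi_\pi$.

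For $J_d\phi_\pi$ I would read the singular values directly off the matrix $D\phi_\pi$ in Equation~(\ref{eq:Dphimatrix}). In the orthonormal bases chosen there, the nonzero singular values of $D\phi_\pi$ are $\alpha = \hat{r}/\hat{h}$ together with $\ell-2$ copies of $1$, while the projection direction $\vv_2$ contributes a singular value $0$. Since the maximum dilation of $d$-volumes under a linear map equals the product of its top $d$ singular values, and $d<\ell$ ensures that at most one of those top $d$ values is $\alpha$ with the rest equal to $1$, I obtain $J_d\phi_\pi \leq \max(\alpha,1)$. The mildly delicate edge case is $d = \ell-1$, where the top $d$ singular values necessarily include $\alpha$ regardless of its size; but even then the product $\alpha \cdot 1^{\ell-2} = \alpha$ still satisfies $\alpha \leq \max(\alpha,1)$.

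Two elementary geometric observations then close the argument. First, Figure~\ref{fig:2d-diagram} exhibits similar triangles along the ray from $\va$ through $\vx$ to $\phi(\vx,\va)$: the line through $\va$ perpendicular to $\tau_\vx$ meets two parallel hyperplanes at distances $h$ and $\hat{h}$, while cutting off segments of length $r$ and $\hat{r}$ on the ray, whence $\hat{r}/r = \hat{h}/h$. Second, $\va$, $\vb$, and $\phi(\vx,\va)$ all lie in $\sigma$ (or its closure), so $\hat{r} = \norm{\phi(\vx,\va)-\va} \leq \diam(\sigma)$ and $\hat{h} = \norm{\vb-\va}\leq \diam(\sigma)$, giving $\max(\alpha,1) \leq \diam(\sigma)/\hat{h}$. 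Combining these facts with the reduction in the first paragraph yields
\[
J_d\phi(\vx,\va) \leq \left(\frac{\hat{h}}{h}\right)^{\!d} \cdot \frac{\diam(\sigma)}{\hat{h}},
\]
which is the claim. The part I expect to be most delicate is the singular-value analysis of the non-square matrix in Equation~(\ref{eq:Dphimatrix}) — in particular, making sure one always pays at most a single factor of $\alpha$ rather than powers of it — but the hypothesis $d<\ell$ makes this uniform across the cases.
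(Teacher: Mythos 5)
Your proof is correct and follows essentially the same route as the paper: factor $\phi = \phi_{\pi}\circ\phi_{\delta}$, charge $(\hat{h}/h)^d$ to the dilation, and bound the $d$-dilation of the nonorthogonal projection by $\diam(\sigma)/\hat{h}$ --- you by reading singular values off $D\phi_{\pi}$ in Equation~(\ref{eq:Dphimatrix}), the paper via the generalized Pythagorean (Cauchy--Binet) expansion, which is the same computation. Your bound $J_d\phi_{\pi}\leq\max(\alpha,1)$ is in fact slightly more careful than the paper's assertion that ``$\alpha$ is a bound on the ratio'' (which fails when $\alpha<1$ and $d\leq\ell-2$, where the ratio can equal $1$), but both cases are absorbed by $\diam(\sigma)/\hat{h}\geq 1$ since $\hat{h}\leq\hat{r}\leq\diam(\sigma)$, so the stated inequality is unaffected.
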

  \begin{proof}
  Following Equation~(\ref{eq:phixa}), we seek bounds on $D
  \phi_{\delta}(\vx,\va)$ and $D \phi_{\pi}(\vx,\va)$. Since
  $D\phi_{\delta} (\vx,\va)$ simply scales by $\hat{r}/r = \hat{h}/h$,
  the expansion of $d$-volume of any $d$-hyperplane by
  $D\phi_{\delta}(\vx, \va)$ is by a factor of $(\hat{h}/h)^d$.  On
  the other hand, bounding the dilation that $D\phi_{\pi}(\vx,\va)$
  can cause in $d$-hyperplanes is a little more involved. We seek a
  bound on
  \begin{equation} \label{eq:dilratio}
  \frac{\sqrt{\det(\, (D\phi_{\pi}(\vx,\va) U)^T (D\phi_{\pi}(\vx,\va)
  U) \,)}} {\sqrt{\det(U^T U)}}
  \end{equation}
  for all $\ell \times d$ matrices $U$. Using the generalized
  Pythagorean theorem \cite[Section 1.5]{KrantzParks2008}, we get
  \[ \det(U^T U) =  \sum_{\lambda\in\Lambda} (\det(U_{\lambda}))^2 \]
  where submatrix $U_{\lambda}$ consists of the $d$ rows of $U$
  specified by the set of index maps $\Lambda$ given as
  \[ 
  \lambda \in \Lambda\equiv \{f | f:[1,...,d] \rightarrow
  [1,...,\ell], ~f \text{ is one to one and increasing}\}.
  \]
  A similar result holds for $\det( (D\phi_{\pi}(\vx,\va) U)^T
  (D\phi_{\pi}(\vx,\va) U) )$, with the functions $f$ considered
  mapping $[1,...,d]$ to $[1,...,\ell-1]$.

  Observe that multiplying by $D\phi_{\pi}(\vx,\va)$ (Equation
  \ref{eq:Dphimatrix}) just scales the first row of $U$ by $\alpha$
  and removes the last row. Thus $\alpha\det(U_{\lambda}) \geq
  \det((D\phi_{\pi}(\vx,\va)U)_{\lambda})$, which implies that
  $\alpha$ is a bound on the ratio in Equation~(\ref{eq:dilratio}).
  Thus we have that
  \[ J_d \phi(\vx,\va) = \left( \frac{\hat{r}}{r}\right)^d
  \frac{\norm{\phi(\vx,\va) - \va}} {\norm{\vb-\va}} = \left(
  \frac{\hat{h}}{h} \right)^d \frac{\norm{\phi(\vx,\va) -
  \va}}{\norm{\vb-\va}} \leq \left(\frac{\hat{h}}{h}\right)^d
  \frac{\diam(\sigma)}{\hat{h}} \]
  holds for all $\vx$ and $\va$ in $\sigma$, where $\diam(\sigma)$ is
  the diameter of the $\ell$-simplex $\sigma$. \end{proof}

  Next we describe a bound on the integral of $J_d \phi(\vx,\va)$ over
  the entire $\ell$-simplex, for a fixed center $\va$. We will find
  that this bound is independent of the position of $\va$. Recall that
  $\perimeter(\sigma)$ and $\per(\sigma)$ denote the perimeter of
  $\ell$-simplex $\sigma$ and the $(\ell-1)$-dimensional volume of the
  perimeter, respectively, and $\intrr(\sigma)$ its interior.

  \begin{lemma} \label{lem:bndJxfixa} 
    For any fixed center $\va$ in the $\ell$-simplex $\sigma$ with $d
    < \ell \leq p=d+k$,
    \[ \int_{\intrr(\sigma)} J_d \phi(\vx,\va) \, {\rm d}{\mathcal
    L}^{\ell}(\vx) \, \leq \, \diam(\sigma) \per(\sigma). \]
  \end{lemma}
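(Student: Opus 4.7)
The plan is to decompose $\sigma$ into cones from $\va$ and perform a change of variables along the projection rays so that the $\hat{h}$ factor in the Jacobian bound of Lemma \ref{lem:bnddjacob} cancels exactly against the Jacobian of the reparametrization. First, partition $\intrr(\sigma)$ up to an $\mathcal{L}^{\ell}$-null set into open cones $C_{\tau} = \{ \vx \in \intrr(\sigma) \,:\, \phi(\vx,\va) \in \intrr(\tau) \}$, one for each $(\ell-1)$-face $\tau$ of $\sigma$. On each $C_{\tau}$, use the parametrization $\Phi_{\tau} : \intrr(\tau) \times (0,1) \to C_{\tau}$ defined by $\Phi_{\tau}(\vy,t) = \va + t(\vy - \va)$, for which $\phi(\Phi_{\tau}(\vy,t),\va) = \vy$ by construction.

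Second, I would compute $\lvert \det D\Phi_{\tau}(\vy,t)\rvert$ explicitly. The tangential partials in $\vy$ contribute a block $t \cdot I_{\ell-1}$, while $\partial \Phi_{\tau}/\partial t = \vy - \va$. Expanding along the $t$-column, the determinant equals $t^{\ell-1}$ times the length of the component of $\vy - \va$ orthogonal to the affine span of $\tau$, which is precisely $\hat{h}_{\tau}$, the orthogonal distance from $\va$ to that span (independent of $\vy$). Hence $\lvert \det D\Phi_{\tau}(\vy,t)\rvert = \hat{h}_{\tau} \, t^{\ell-1}$.

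Third, I would translate the bound of Lemma \ref{lem:bnddjacob} into these coordinates. Similar triangles along the ray through $\va$ and $\vx = \Phi_{\tau}(\vy,t)$ give $h/\hat{h} = r/\hat{r} = t$, so $(\hat{h}/h)^{d} = t^{-d}$, and the lemma reads
\[ J_d \phi(\Phi_{\tau}(\vy,t), \va) \leq t^{-d} \cdot \frac{\diam(\sigma)}{\hat{h}_{\tau}}. \]
Applying change of variables on $C_{\tau}$,
\[ \int_{C_{\tau}} J_d \phi(\vx,\va) \, d\mathcal{L}^{\ell}(\vx) \leq \int_{\tau} \int_0^1 t^{-d} \cdot \frac{\diam(\sigma)}{\hat{h}_{\tau}} \cdot \hat{h}_{\tau} \, t^{\ell-1} \, dt \, d\mathcal{L}^{\ell-1}(\vy), \]
where the $\hat{h}_{\tau}$ factors cancel. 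Since $\ell > d$ by hypothesis, $\int_0^1 t^{\ell-1-d} \, dt = 1/(\ell-d) \leq 1$, so $\int_{C_{\tau}} J_d \phi \, d\mathcal{L}^{\ell} \leq \diam(\sigma) \, \vol_{\ell-1}(\tau)$. Summing over the $(\ell-1)$-faces $\tau$ of $\sigma$ yields $\diam(\sigma) \per(\sigma)$.

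The main obstacle is pinning down the Jacobian calculation and verifying that the $\hat{h}_{\tau}$ arising in the cone coordinates is the same $\hat{h}$ appearing in Lemma \ref{lem:bnddjacob}; once that identification is made, the cancellation is automatic and the remainder is a routine one-dimensional integral. A minor point, which I would mention but not belabor, is that the measure-zero cone skeleton (where rays from $\va$ pass through lower-dimensional faces and $\phi$ fails to be smooth) does not affect the integral, so the partition of $\intrr(\sigma)$ into the $C_{\tau}$'s suffices.
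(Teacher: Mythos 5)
Your proof is correct and follows essentially the same route as the paper's: both decompose $\sigma$ into the cones over its $(\ell-1)$-faces with apex $\va$ and reduce to the one-dimensional integral $\int_0^1 t^{\ell-1-d}\,dt = 1/(\ell-d) \leq 1$. Your explicit cone-coordinate change of variables with Jacobian $\hat{h}_\tau t^{\ell-1}$ is just the paper's slicing by hyperplanes parallel to $\tau_j$ (where $\vol_{\ell-1}(\tau_j(h)) = (h/\hat{h})^{\ell-1}\vol_{\ell-1}(\tau_j)$) written in the variable $t = h/\hat{h}$.
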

  \begin{proof} 
  Consider the $(\ell-1)$-dimensional faces $\tau_j$ of $\sigma$, with
  $\perimeter(\sigma) = \{ \cup_j \tau_j \,|\, \tau_j \in \sigma,\,
  \dim(\tau_j)=\ell-1\}$. Let $\sigma_j$ denote the $\ell$-simplex
  generated by $\va$ and $\tau_j$. Then
  \[\int_{\intrr(\sigma)} J_d \phi(\vx,\va) \, {\rm d}{\mathcal
  L}^{\ell}(\vx) = \sum_j \int_{\intrr(\sigma_j)} J_d \phi(\vx,\va) \,
  {\rm d}{\mathcal L}^{\ell}(\vx).\]
  Let $\tau_j(h)$ denote the $(\ell-1)$-simplex formed by the
  intersection of $\sigma_j$ and the $(\ell-1)$-hyperplane parallel to
  $\tau_j$ at a distance $h$ from $\va$. Thus, $\tau_j(\hat{h})$ is
  $\tau$ itself. We observe that our bound on $J_d \phi(\vx,\va)$ is
  constant in $\tau_j(h)$ for any $h$. The $(\ell-1)$-dimensional
  volume of $\tau_j(h)$ is given by
  \[ \vol_{\ell-1}(\tau_j(h)) =
  \left(\frac{h}{\hat{h}}\right)^{\ell-1} \vol_{\ell-1} (\tau_j).\]
  Using the bound on $J_d \phi(\vx,\va)$ from Lemma
  \ref{lem:bnddjacob}, and noting that $\diam(\sigma_j) \leq
  \diam(\sigma)~ \forall \,j$, we get
  \[ \int_{\intrr(\sigma_j)} J_d \phi(\vx,\va) \, {\rm d}{\mathcal
  L}^{\ell}(\vx) \leq \int_0^{\hat{h}}
  \left(\frac{h}{\hat{h}}\right)^{\ell-1} \vol_{\ell-1} (\tau_j) \,
  \left(\frac{\hat{h}}{h}\right)^d \frac{\diam(\sigma)}{\hat{h}} \,
  {\rm d}h = \frac{\vol_{\ell-1}(\tau_j) \diam(\sigma)}{\ell-d}.\]
  Summing this quantity over all $\tau_j \in \perimeter(\sigma)$ and
  replacing $\ell-d \geq 1$ with $1$ gives the overall
  bound.  \end{proof}

  We now bound the integral of $J_d \phi(\vx,\va)$ over centers $\va$
  with a fixed $\vx$ that we are retracting onto $\perimeter(\sigma)$.
  Examination of the corresponding proof for the original deformation
  theorem \cite[Section 7.7]{KrantzParks2008} shows that symmetry of
  the cubical mesh plays a very special role, which cannot be
  duplicated in the case of simplicial complex. In particular, we must
  avoid integrating over $\va$ close to the perimeter of
  $\sigma$. Hence we integrate over as big a region as we can while
  still avoiding a neighborhood of the perimeter. As in the statement
  of the main Theorem \ref{thm:simpldeform}, let $\hat{\cal B}_\sigma$
  be the largest ball inscribed in $\sigma$, ${\cal B}_\sigma$ be the
  ball with half the radius and same center as $\hat{\cal B}_\sigma$,
  and $r_{\sigma}$ be the radius of ${\cal B}_\sigma$.
  \begin{lemma} \label{lem:bndJafixx}
    For any point $\vx$ in the $\ell$-simplex $\sigma$ with $d < \ell
    \leq p=d+k$,
     \[ \int_{{\cal B}_\sigma} J_d \phi(\vx,\va) \, {\rm d}{\cal
     L}^{\ell} (\va) \, \leq \, \diam(\sigma) \per(\sigma) +
     \vol_{\ell}({\cal B}_\sigma) \frac{\diam(\sigma)}{r_{\sigma}}. \]
  \end{lemma}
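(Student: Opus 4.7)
For each $(\ell-1)$-face $\tau_j$ of $\sigma$, set $A_j = \{\va \in {\cal B}_\sigma : \phi(\vx,\va) \in \tau_j\}$, so that $\int_{{\cal B}_\sigma} J_d\phi\,\mathrm{d}\va = \sum_j \int_{A_j} J_d\phi\,\mathrm{d}\va$. On each $A_j$ I would change variables from $\va$ to $(\vz,\lambda)$, where $\vz = \phi(\vx,\va) \in \tau_j$ and $\lambda = \|\va - \vx\|/\|\vx - \vz\|$, so that $\va = (1+\lambda)\vx - \lambda\vz$. A direct determinant calculation shows that the Jacobian is $|\det \partial\va/\partial(\vz,\lambda)| = \lambda^{\ell-1} p_j$, where $p_j$ is the perpendicular distance from $\vx$ to the hyperplane spanned by $\tau_j$.

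The point of this parametrization is that the otherwise troublesome factors $r = \|\vx-\vz\|$ and $p_j$ cancel algebraically. With $\hat h = (1+\lambda)p_j$ and $h = \lambda p_j$, Lemma~\ref{lem:bnddjacob} gives $J_d\phi \cdot \lambda^{\ell-1} p_j \leq \diam(\sigma)(1+\lambda)^{d-1}\lambda^{\ell-1-d}$, so that
\[
  \int_{A_j} J_d\phi(\vx,\va)\,\mathrm{d}\va \;\leq\; \diam(\sigma) \int_{\tau_j}\int_{K(\vz)} (1+\lambda)^{d-1}\lambda^{\ell-1-d}\,\mathrm{d}\lambda\,\mathrm{d}\vz,
\]
where $K(\vz) = \{\lambda > 0 : (1+\lambda)\vx - \lambda\vz \in {\cal B}_\sigma\}$ is an interval. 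I would then split the $\lambda$-integral at $\lambda = 1$, which is the geometric threshold $h = p_j$. On $\{\lambda \leq 1\}$ the integrand $(1+\lambda)^{d-1}\lambda^{\ell-1-d}$ is dominated by $2^{d-1}$ times an integrable power of $\lambda$ (integrable because $\ell > d$), so integrating first in $\lambda$ and then over $\vz \in \tau_j$ produces a contribution of order $\diam(\sigma)\vol_{\ell-1}(\tau_j)$; summing over $j$ yields the $\diam(\sigma)\per(\sigma)$ term, in exact parallel with Lemma~\ref{lem:bndJxfixa}.

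On $\{\lambda > 1\}$ the ratio $\hat h/h = 1 + 1/\lambda$ satisfies $\hat h/h \leq 2$, so $(\hat h/h)^d$ is bounded, and crucially the inclusion $\va \in {\cal B}_\sigma$ forces $\hat h \geq r_\sigma$: the inradius of $\sigma$ equals $2r_\sigma$ and ${\cal B}_\sigma$ has radius $r_\sigma$ about the incenter, so every point of ${\cal B}_\sigma$ lies at perpendicular distance at least $r_\sigma$ from every face-hyperplane of $\sigma$. Together these give the pointwise bound $J_d\phi \leq 2^d\diam(\sigma)/r_\sigma$ on this piece, whose integral is at most $2^d(\diam(\sigma)/r_\sigma)\vol_\ell(A_j \cap \{\lambda > 1\})$. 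Summing over $j$ produces at most $\vol_\ell({\cal B}_\sigma)\diam(\sigma)/r_\sigma$, with the dimension-dependent constants absorbed as is done elsewhere in the paper.

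The hard part will be the small-$\lambda$ regime, where I must verify that no hidden $1/p_j$ factor resurfaces when $\vx$ sits close to a face $\tau_j$. The change of variables has already cancelled such factors algebraically, and confining to $\lambda \leq 1$ keeps us in the regime where the reflected cone through $\vx$ over $\tau_j$ behaves analogously to the direct cone used in Lemma~\ref{lem:bndJxfixa}, so the bound holds uniformly in the position of $\vx$. The restriction to ${\cal B}_\sigma$ rather than all of $\sigma$ is precisely what secures $\hat h \geq r_\sigma$ in the large-$\lambda$ regime, which is the geometric reason behind the extra $\vol_\ell({\cal B}_\sigma)\diam(\sigma)/r_\sigma$ correction term that distinguishes this bound from the one in Lemma~\ref{lem:bndJxfixa}.
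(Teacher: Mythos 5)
Your strategy is essentially the paper's, recast in explicit coordinates: your region $\{\lambda\le 1\}$ is exactly the paper's ``Region 2'' (the reflection $\sigma'_j$ through $\vx$ of the cone over $\tau_j$ with apex $\vx$), your $\{\lambda>1\}$ piece plays the role of ``Region 3,'' and the two key ingredients --- redoing the cone integration of Lemma~\ref{lem:bndJxfixa} on the reflected cones, and the inradius estimate $\hat{h}\ge r_\sigma$ for $\va\in{\cal B}_\sigma$ on the remainder --- are the same. Your change of variables $\va=(1+\lambda)\vx-\lambda\vz$ with Jacobian $\lambda^{\ell-1}p_j$ is a genuine improvement in transparency: it makes explicit why the potentially dangerous factor $1/p_j$ (when $\vx$ is near a face) cancels, which the paper disposes of only with the phrase ``by an argument almost identical to that above.''

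The one real gap is the constants, and you cannot absorb them. Your own computation gives $\int_0^1(1+\lambda)^{d-1}\lambda^{\ell-1-d}\,{\rm d}\lambda\le 2^{d-1}/(\ell-d)$ on the small-$\lambda$ piece and $(\hat{h}/h)^d\le 2^d$ on the large-$\lambda$ piece, so what you have actually proved is
\[
\int_{{\cal B}_\sigma}J_d\phi(\vx,\va)\,{\rm d}{\cal L}^{\ell}(\va)\;\le\;2^{d-1}\diam(\sigma)\per(\sigma)+2^{d}\,\vol_\ell({\cal B}_\sigma)\frac{\diam(\sigma)}{r_\sigma},
\]
which is the stated inequality only up to a factor $2^{d}$ (and the excess is real: for $d=2$, $\ell=3$ the first integral equals $3/2>1$). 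The paper does not ``absorb dimension-dependent constants elsewhere'': the constants of this lemma pass verbatim into $\upvartheta_K=\upkappa_1+\upkappa_2$ in Lemma~\ref{lem:retTbnd} and hence into the explicit factors $(4\upvartheta_K)^k$ of Theorem~\ref{thm:simpldeform}, so your version would silently degrade every $4\upvartheta_K$ to roughly $2^{d+2}\upvartheta_K$. (In fairness, the paper's own claim that Region 2 contributes exactly $\diam(\sigma)\per(\sigma)$ rests on an integrand bound $(h'/h)^d\diam(\sigma)/\hat{h}$ with $h'\le h$ that is asserted rather than derived, and your honest evaluation of the same integral suggests that constant deserves scrutiny on both sides.) To complete your proof as a proof of the lemma as stated, you must either sharpen the pointwise bound so the constants come out to $1$, or carry the $2^{d-1}$ and $2^{d}$ through and restate the lemma and the downstream theorem accordingly.
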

  \begin{proof} 
  Similar to the subsimplices of $\sigma$ considered in the Proof of
  Lemma \ref{lem:bndJxfixa}, let $\sigma_j$ now denote the
  $\ell$-simplex formed by $\vx$ and $\tau_j \in \perimeter(\sigma)$.
  In order to derive an upper bound, we integrate instead over regions
  that are by construction bigger than these subsimplices of $\sigma$.
  Denoting the simplex $\sigma_j$ as Region 1, we define Regions 2 and
  3 as follows. We refer the reader to Figure~\ref{fig:integrate-a}
  for an illustration of this construction. Let $\sigma'_j$ be the
  reflection of $\sigma_j$ through $\vx$, and similarly, let $\tau'_j$
  be the reflection through $\vx$ of $\tau_j$. We define $\sigma'_j$
  as Region 2. Notice that unlike Region 1, Region 2 need not be
  contained fully in $\sigma$. As defined in Section
  \ref{sssec:retrsetup}, let $\vz$ be the unit vector normal to the
  $(\ell-1)$-hyperplane containing $\tau_j$ pointing into $\sigma$. We
  define Region 3 as the $\ell$-dimensional set $\,\tau'_j +
  [0,\diam(\sigma)]\vz$, as illustrated in
  Figure~\ref{fig:integrate-a}.

  Note that the union of all Region 2's and Region 3's cover $\sigma$.
  By an argument almost identical to that above, we have the following
  upper bound on the integrand in question.
  \begin{equation*} \label{eq:int-a-bound} 
  \left(\frac{h'}{h}\right)^d \frac{\norm{\phi(\vx,\va) -
  \va}}{\hat{h}} \, \leq \, \left( \frac{h'}{h} \right)^d
  \frac{\diam(\sigma)}{\hat{h}}.
  \end{equation*}
  \begin{figure}[ht!]
  \centering 
  \includegraphics[scale=0.6, trim=1in 3.2in 1.5in 1in,
  clip]{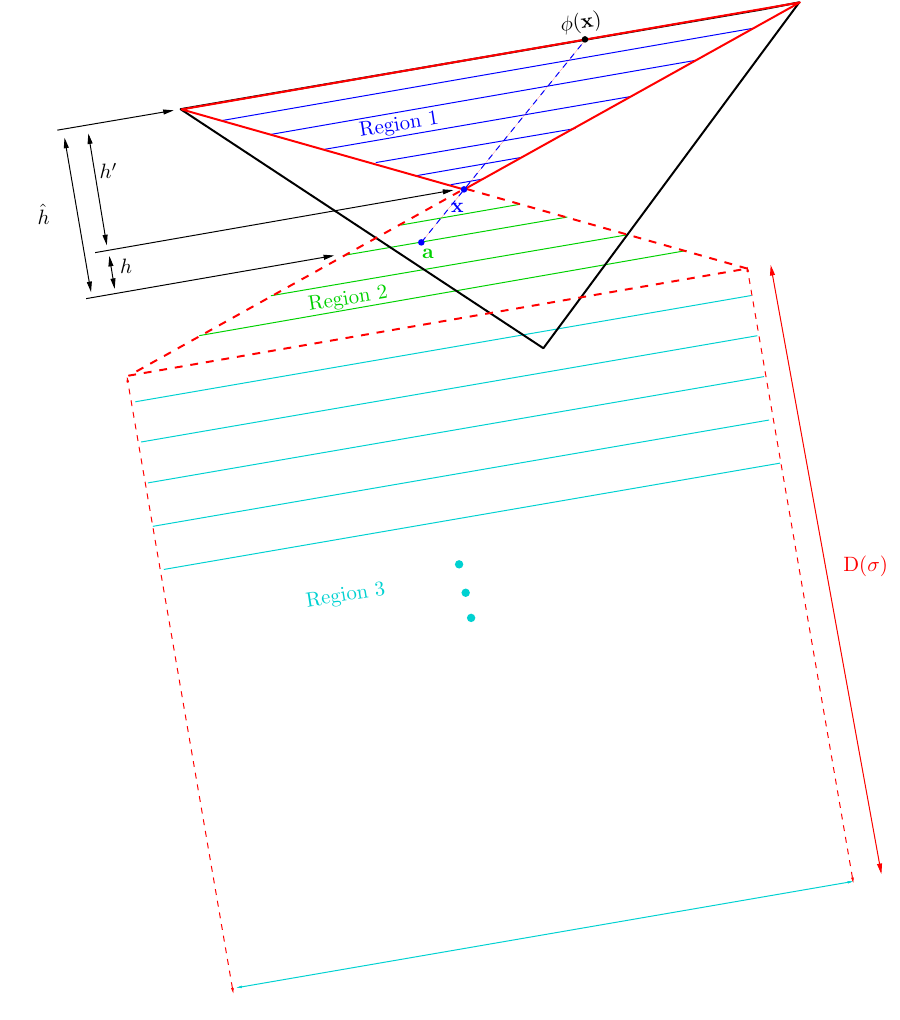} 
  \caption{Illustration for integration of Jacobian bound over centers
  $\va$ instead of $\vx$'s. For the case of the triangle shown, there
  will be 3 sets of 3 regions. In general there will be 3 regions for
  every face of the simplex.}
  \label{fig:integrate-a} 
  \end{figure}
  Integrating the second of these two terms over Region 2 and summing
  the integral over all such Regions 2 for all faces $\tau_j$, we get
  the upper bound of $\diam(\sigma)\per(\sigma)$. Here we use the same
  arguments as the ones employed in Lemma \ref{lem:bndJxfixa}. Region
  2 alone is not guaranteed to cover ${\cal B}_{\sigma}$ as some of
  ${\cal B_\sigma}$ may occupy parts of Region 3. Since $\va \in {\cal
    B}_\sigma$, we have $\hat{h} > r_{\sigma}$, and $h' \leq h$ when
  $\va \in $Region 3, so that
  \[\left(\frac{h'}{h}\right)^d \frac{\diam(\sigma)}{\hat{h}} \leq
  \frac{\diam(\sigma)}{r_{\sigma}}.\]
  Combining the above estimates while integrating over all such
  Regions 2 and 3 gives us the bound specified in the
  Lemma. \end{proof}

\subsubsection{Bounding the pushforwards of the current}

  We consider the $d$-current $T$, and employ the bounds on the
  Jacobian of retraction described above to the pushforwards of $T$
  and its boundary $\boundary T$ on to the simplicial complex $K$.
  Our treatment of the pushforwards essentially follows the
  corresponding results of Krantz and Parks for the case of square
  grid \cite[Pages 218--219]{KrantzParks2008}. We denote by $\norm{T}$
  the total variation measure of the current $T$, which is determined
  by the identity
  \[
  \norm{T}(W) = \sup_{\text{\parbox{9em}{\centering$\omega \in
        \mathcal{D}^d$, $\norm{\omega} = 1$,\\$\spt\omega \subset
        W$}}} T(\omega).
  \]
  \begin{lemma} \label{lem:retTbnd} 
  Suppose $K$ is a $p$-dimensional simplicial complex with $p = d+k$
  for $k \geq 1$. Consider the stepwise retraction of the $d$-current
  $T\subset K$ (the $(d-1)$-current $\boundary T \subset K$) onto the
  $d$-skeleton of $K$ (respectively, the $(d-1)$-skeleton of
  $K$). Each step of the retraction on to the perimeter of an
  $\ell$-simplex $\sigma$ for $d < \ell \leq p$ (respectively, $d \leq
  \ell \leq p $) increases the mass of $T$ or $\boundary T$ by at most
  a factor of
  \[ 4 \upvartheta_K = 4(\upkappa_1 + \upkappa_2) = 4 \max_{\sigma \in
  K} \left( \frac{\diam(\sigma)\per(\sigma)}{\vol_{\ell}({\cal
  B}_\sigma)} + \frac{\diam(\sigma)}{r_{\sigma}} \right).\]
  \end{lemma}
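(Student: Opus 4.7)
The plan is to bound the mass of the pushforward of $T$ and $\boundary T$ under the retraction $\phi(\cdot,\va)$ for a carefully chosen center $\va \in {\cal B}_\sigma$. Since $\phi(\cdot,\va)$ is Lipschitz on compact subsets of $\sigma \setminus \{\va\}$, the standard mass inequality for pushforwards of currents gives
\begin{equation*}
\mass\bigl((\phi(\cdot,\va))_\# T|_\sigma\bigr) \;\leq\; \int_\sigma J_d \phi(\vx,\va) \, d\norm{T}(\vx),
\end{equation*}
with an analogous estimate using $J_{d-1}\phi$ in place of $J_d\phi$ controlling $\mass((\phi(\cdot,\va))_\# \boundary T|_\sigma)$ whenever $\ell \geq d$. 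I would first note that the proofs of Lemmas \ref{lem:bnddjacob}--\ref{lem:bndJafixx} remain valid with the exponent $d$ replaced by $d-1$ (the exponent only decreases in $(\hat h/h)^d$, and the denominator $\ell-(d-1) \geq 1$ stays finite), so the same integrated bound $\diam(\sigma)\per(\sigma) + \vol_\ell({\cal B}_\sigma)\diam(\sigma)/r_\sigma$ applies to $\int_{{\cal B}_\sigma} J_{d-1}\phi(\vx,\va)\, d\va$.

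Next, I would integrate the pushforward inequality over $\va \in {\cal B}_\sigma$ and apply Fubini together with Lemma \ref{lem:bndJafixx} and the definitions of $\upkappa_1,\upkappa_2$:
\begin{equation*}
\int_{{\cal B}_\sigma} \mass\bigl((\phi(\cdot,\va))_\# T|_\sigma\bigr) \, d\va \;\leq\; \int_\sigma\!\!\left[\int_{{\cal B}_\sigma} J_d\phi(\vx,\va)\, d\va\right] d\norm{T}(\vx) \;\leq\; \vol_\ell({\cal B}_\sigma)\, \upvartheta_K\, \mass(T|_\sigma),
\end{equation*}
and likewise for $\boundary T$. So on average over $\va \in {\cal B}_\sigma$, the retraction step expands the mass of either current by a factor of at most $\upvartheta_K$.

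Finally, to obtain a single center $\va^*$ that simultaneously controls both $T|_\sigma$ and $\boundary T|_\sigma$, I would invoke Markov's inequality with constant $4$ applied to the probability measure $d\va / \vol_\ell({\cal B}_\sigma)$. The ``bad'' sets
\[A_T = \bigl\{\va \in {\cal B}_\sigma : \mass((\phi(\cdot,\va))_\# T|_\sigma) > 4\upvartheta_K \mass(T|_\sigma)\bigr\},\]
\[A_{\boundary T} = \bigl\{\va \in {\cal B}_\sigma : \mass((\phi(\cdot,\va))_\# \boundary T|_\sigma) > 4\upvartheta_K \mass(\boundary T|_\sigma)\bigr\}\]
each have Lebesgue measure at most $\vol_\ell({\cal B}_\sigma)/4$, so their union has measure at most $\vol_\ell({\cal B}_\sigma)/2 < \vol_\ell({\cal B}_\sigma)$; any $\va^* \in {\cal B}_\sigma \setminus (A_T \cup A_{\boundary T})$ delivers the desired factor $4\upvartheta_K$ for both currents in this step. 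The constant $4$ in the statement arises exactly here, as two factors of $2$ needed to split Markov's inequality between $T$ and $\boundary T$.

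The main obstacle I anticipate is confirming that the boundary is handled with the same $\upvartheta_K$, particularly in the borderline case $\ell = d$ where $T|_\sigma$ is already in the $d$-skeleton and only $\boundary T|_\sigma$ is retracted; the adaptation of Lemmas \ref{lem:bnddjacob}--\ref{lem:bndJafixx} from $d$ to $d-1$ noted above removes the apparent $1/(\ell-d)$ singularity. A secondary bookkeeping point is that the choice of $\va^*$ is made independently on each $\ell$-simplex, consistent with the set-up in Section \ref{sssec:retrsetup}, and the per-step factor $4\upvartheta_K$ then compounds into the $(4\upvartheta_K)^k$ and $(4\upvartheta_K)^{k+1}$ appearing in Theorem \ref{thm:simpldeform}.
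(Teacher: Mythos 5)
Your proposal is correct and follows essentially the same route as the paper's proof: push the mass inequality through Fubini and Lemma \ref{lem:bndJafixx} to get an averaged bound of $\upvartheta_K$ over centers $\va \in {\cal B}_\sigma$, then use Markov's inequality with the factor $4$ to exclude the two ``bad'' sets (each of measure at most a quarter, the paper loosely says a third, of ${\cal B}_\sigma$) and select a single center controlling both $T|_\sigma$ and $\boundary T|_\sigma$. Your explicit remark that Lemmas \ref{lem:bnddjacob}--\ref{lem:bndJafixx} survive with $d$ replaced by $d-1$ (so the $\ell = d$ boundary-only step is covered) is a point the paper leaves implicit, but it is not a different argument.
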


  \begin{proof} 
  Using Fubini's theorem \cite[Page 26]{KrantzParks2008} and applying
  the bound in Lemma \ref{lem:bndJafixx}, we get
  \[ \int_{{\cal B}_\sigma} \int_{\sigma} J_d \phi(\vx,\va) \, {\rm
  d}\norm{T}(\vx) \, {\rm d} {\cal L}^{\ell}(\va) = \int_{\sigma}
  \int_{{\cal B}_\sigma} J_d \phi(\vx,\va) \, {\rm d}{\cal
  L}^{\ell}(\va) \, {\rm d} \norm{T}(\vx) \, \leq \,
  \upvartheta_{\sigma} \mass(T|_{\sigma}), \]
  where $\upvartheta_{\sigma} = \diam(\sigma)\per(\sigma) +
  \vol_d({\cal B}_\sigma) (\diam(\sigma)/r_{\sigma})\,$ and
  $T|_{\sigma}\,$ is the portion of the current $T$ restricted to the
  simplex $\sigma$.  Consider the subset of ${\cal B}_{\sigma}$
  defined as
  \[ H_T = \left\{ \va \in {\cal B}_{\sigma} \, \big| \, \int_{\sigma}
  J_d \phi(\vx,\va) \, {\rm d}\norm{T}(\vx) > \frac{
  4\upvartheta_{\sigma} \mass(T|_{\sigma})}{\vol_{\ell}({\cal
  B}_{\sigma})} \right\}. \]
  Then $\vol_{\ell}(H_T) \leq (1/3) \vol_{\ell}({\cal
  B}_{\sigma})$. Similarly we define $H_{\boundary T}$ for the
  pushforward of $\boundary T$ and get $\vol_{\ell}(H_{\boundary T})
  \leq (1/3) \vol_{\ell}({\cal B}_{\sigma})$. Then the set $\,{\cal
  B}_{\sigma} \setminus \{H_T \cup H_{\boundary T} \}\,$ defines a
  subset of ${\cal B}_{\sigma}$ with positive measure, with the
  centers $\va$ in this subset satisfying $\, \int_{\sigma} J_d
  \phi(\vx,\va) \, {\rm d} \norm{T}(\vx) \, \leq \, 4
  \upvartheta_{\sigma} \mass(T|_{\sigma})/\vol_{\ell}({\cal
  B}_{\sigma})\,$ and $\, \int_{\sigma} J_d \phi(\vx,\va) \, {\rm d}
  \norm{\boundary T}(\vx) \, \leq \, 4 \upvartheta_{\sigma}
  \mass(T|_{\sigma})/\vol_{\ell}({\cal B}_{\sigma})$. Hence we can
  choose centers to retract from in each simplex $\sigma$ such that
  the expansion of mass of the current restricted to that simplex is
  bounded by $4 \upvartheta_{\sigma} / \vol_{\ell}({\cal
  B}_{\sigma})$. The bound specified in the Lemma follows when we
  consider retracting the entire current over multiple simplices in
  $K$, and set $\upvartheta_K = \max_{\sigma \in K}
  \upvartheta_{\sigma}$ as the generic upper bound that holds for all
  simplices in $K$.
  \end{proof}

  \paragraph{Bound on complete sequence of retractions.} We can apply
  the bound specified in Lemma \ref{lem:retTbnd} over multiple levels
  $\ell$. Pushing $T$ onto the $d$-skeleton of $p$-complex $K$
  multiplies the mass of $T$ by a factor of at most
  $(4\upvartheta_K)^k$. Likewise, pushing $\boundary T$ on to the
  $(d-1)$-skeleton multiplies the mass of $\boundary T$ by a factor of
  at most $(4\upvartheta_K)^{k+1}$.

\subsubsection{Bounding the distance between the current and its simplicial approximation}

  In the final step, we construct the simplicial current $P$
  approximating the original current $T$, and bound the flat norm
  distance between the two. Since we are now considering retraction
  maps over many simplices simultaneously, we let $\phi_i$ denote the
  global projection from the $(p-i+1)-$skeleton to the
  $(p-i)-$skeleton, suppressing the particular $\vx$ and $\va$.  We
  denote the composition of all these steps as $\psi^1 \equiv \phi_k
  \circ \dots \circ \phi_1$ and hence we map $T$ forward by $\psi^1$,
  picking centers (see Lemma \ref{lem:bndJafixx}) to project from in
  each step and in each simplex.  We pick each of these centers such
  that the retractions map $\boundary T$ with bounded amplification of
  mass as well (see Lemma \ref{lem:retTbnd}).

  The homotopy formula \cite[Section 7.4.3]{KrantzParks2008} states
  that given a smooth homotopy $g$ from $f_0$ to $f_1$ where $f_0, f_1
  : U \subseteq \R^{n_0} \to \R^{n_1}$ are smooth functions with
  $g(0,x) = f_0(x)$ and $g(1,x) = f_1(x)$, if $T$ is a $d$-current and
  $f^{-1}(F) \cap \spt f$ is compact for every compact set $F
  \subseteq \R^{n_1}$, we have that the difference in pushforwards of
  $T$ under $f_1$ and $f_0$ is given by
  \[
    {f_1}_{\#}(T) - {f_0}_{\#}(T) = \boundary g_{\#}([0,1] \times T) +
    g_{\#}([0,1] \times \boundary T).
    \]
  Define the homotopy $g(\gamma,\vx) = \gamma\vx + (1-\gamma)
  \psi^1(\vx)$ for $\gamma \in [0,1]$. Then the homotopy formula gives
  \[ T - \psi^1_{\#}(T) = \boundary g_{\#}([0,1] \times T) +
  g_{\#}([0,1] \times \boundary T).\]
  We define $R = g_{\#}([0,1] \times T)$ and $Q_1 = g_{\#}( [0,1]
  \times \boundary T)$.  Then we get
  \begin{equation} \label{eq:Tminuspsi}
    T - \psi^1_{\#}(T) = \boundary R + Q_1. 
  \end{equation}

  \noindent Finally, we map $\psi^1_{\#}(\boundary T)$ forward to the
  $(d-1)$-skeleton of simplicial complex $K$ with $\phi = \phi_{k+1}$
  to get $\psi^2_{\#}(\boundary T) = \phi_{\#}(\psi^1_{\#}( \boundary
  T))$. For this purpose, consider the homotopy $h(\gamma,\vx)$ from
  $\psi^1_{\#}(\boundary T)$ to $\psi^2_{\#}(\boundary T)$, i.e.,
  \[ h(\gamma,x) = \gamma\psi^1_{\#}(\vx) + (1-\gamma)
  \psi^2_{\#}(\vx) ~ \mbox{ for }~\gamma \in [0,1]. \]
  We define
  \begin{equation} \label{eq:defP}
    P = \psi^1_{\#}(T)- h_{\#}([0,1]\times\psi^1_{\#}(\boundary T)).
  \end{equation}
  $P$ is a $d$-current whose boundary $\boundary P$ is contained in the
  $(d-1)$-skeleton of $K$. Define $Q_2 = h_{\#}([0,1] \times
  \psi^1_{\#}(\boundary T))$.  Using the homotopy formula, we get
   \begin{align*} 
    \boundary P & = \boundary\left( \, \psi^1_{\#}(T)-
    h_{\#}([0,1]\times\psi^1_{\#}(\boundary T)) \, \right) \\
              & =  \, \psi^1_{\#}(\boundary T)-
              \boundary h_{\#}([0,1]\times\psi^1_{\#}(\boundary T)) \, \\
              & = \psi^2_{\#}(\boundary T) \subset (d-1)\text{-skeleton of } K.
   \end{align*}
  Equation (\ref{eq:defP}) gives $\psi^1_{\#}(T) = P + Q2$.  Defining
  $Q = Q_1 + Q_2$, Equation (\ref{eq:Tminuspsi}) gives
  \begin{align*}
    T - (P+Q2) & = \boundary R + Q_1, ~~\mbox{hence } \\
    T - P      & = \boundary R + Q.  
  \end{align*}

  Finally, we apply the bounds on the retraction described in Lemma
  \ref{lem:retTbnd} and the paragraph following this Lemma to the
  masses of the pushforwards. Noticing that $\diam(\sigma) \leq
  \Updelta$ for all $\sigma \in K$, we get the following bounds, which
  finish the proof of our simplicial deformation theorem (Theorem
  \ref{thm:simpldeform}). 
  \begin{align*}
  \mass(P) 	    & \leq  (4\upvartheta_{K})^k \mass(T) + 
			\Updelta(4\upvartheta_{K})^{k+1}\mass(\boundary T)\\
                    &  =    (4\upvartheta_{K})^k\left(\mass(T) + 
			\Updelta(4\upvartheta_{K})\mass(\boundary T)\right),\\
  \mass(\boundary P) & \leq  (4\upvartheta_{K})^{k+1}\mass(\boundary T),\\
  \mass(R)          & \leq   \Updelta \mass(\psi^1_{\#}(T)) \\
               	    & \leq   \Updelta (4\upvartheta_{K})^{k} \mass(T),~\mbox{ and } \\
  \mass(Q)          & \leq  \Updelta(4\upvartheta_{K})^{k} \mass(\boundary T)
			+ \Updelta(4\upvartheta_{K})^{k+1} \mass(\boundary T)\\
                    &  =    \Updelta(4\upvartheta_{K})^{k}(1+4\upvartheta_{K})
			\mass(\boundary T).
  \end{align*}
\qed


  \begin{remark} 
  \label{rem:subdivisionregularity}
    The influence of {\em Simplicial regularity} as measured by
    $\upkappa_1$ and $\upkappa_2$ is clearly revealed by the statement
    of our deformation theorem (Theorem \ref{thm:simpldeform}).
    Explicit constants are a simple yet useful part of the result; as
    observed above in Remark~\ref{rem:defthm1}, the statement of this
    theorem leads to an easy observation that the flat norm distance
    between $T$ and $P$ can be made a small as desired by subdividing
    the simplicial complex in a manner that keeps the regularity
    constants bounded.  This can be done, for example, by using the
    subdivision algorithm of Edelsbrunner and Grayson \cite{EdGr2000}.
  \end{remark}

  \begin{remark}
    We did not explicitly discuss the case of $0$-dimensional
    currents.  In this case, the bounds on mass expansion are all
    equal to one.
  \end{remark}

\subsection{Comparison of Bounds of Approximation} \label{ssec:compbnds}

Sullivan studied the deformation of integral currents on to the
skeleton of a cell complex, which is composed of compact convex
sets. He presented a deformation theorem for deforming integral
currents on to the boundary of a cell complex \cite[Theorem
  4.5]{Sullivan1990}. For ease of comparison, we use {\em our}
notation to restate the bounds given by Sullivan for deforming a
$d$-current $T$ to a polyhedral current $P$ in the boundary of a cell
complex in $\R^q$. Recall that in our simplicial deformation theorem
(Theorem \ref{thm:simpldeform}), the simplicial complex considered has
dimension $p$ and is embedded in $\R^q$ for $q \geq p$.  Furthermore,
$\upkappa_1$, $\upkappa_2$, $\Updelta$, and $\upvartheta_K$ are
simplicial regularity constants.  We also note that even though
Sullivan stated his results for full-dimensional complexes and the
standard flat norm, it is straightforward to extend them to lower
dimensional complexes and the flat norm with scale:
\begin{align}
  \mass(P) & ~\leq~ {p \choose d} \left( 2d \left(\frac{d+1}{2d}
  \upkappa_2 \right)^{d+1} \right)^{p-d+1}
  \mass(T) \label{eq:SullbndMP}, \\
  \vspace*{-0.1in} \nonumber \\
  \mass(\boundary P) & ~\leq~ {p \choose d-1} \left(2d \left(
  \frac{d+1}{2d} \upkappa_2 \right)^{d} \right)^{p-d+1}
  \mass(\boundary T), ~~~ \mbox{and} \label{eq:SullbndMbdyP} \\
  \vspace*{-0.5in} \nonumber \\
 \F_\lambda(T,P) & ~=~\lambda^d \cdot \F_1(T/\lambda,
 P/\lambda)\nonumber\\ &~\leq~ \lambda^d \cdot (p-d+1) \Updelta \, (
 \, \mass(P/\lambda) + \mass(\boundary P/\lambda) \,)\nonumber\\ &~=
 \lambda^d \cdot (p-d+1) \Updelta \, ( \,\lambda^{-d} \cdot \mass(P) +
 \lambda^{1-d} \cdot \mass(\boundary P) \,)\nonumber\\ &~= (p-d+1)
 \Updelta \, ( \, \mass(P) + \lambda \mass(\boundary P) \,).
\label{eq:SullbndFPbdyP}
\end{align}
Our results corresponding to the first two bounds in Equations
(\ref{eq:SullbndMP}) and (\ref{eq:SullbndMbdyP}) are presented in
Equations (\ref{eq:simpdefthmMP}) and (\ref{eq:simpdefthmMbdyP}) in
Theorem \ref{thm:simpldeform}, which we repeat here with the
substitution $k = p-d$.
  \begin{align}
    \mass(P) & \leq (4\upvartheta_K)^{p-d} \mass(T)+ \Updelta(4\upvartheta_{K})^{p-d+1}\mass(\boundary T),
    \tag{\ref{eq:simpdefthmMP} revisited}\\
    \mass(\boundary P) & \leq (4\upvartheta_K)^{p-d+1} \mass(\boundary T), 
    \tag{\ref{eq:simpdefthmMbdyP} revisited}\\
    \mass(R) & \leq \Updelta(4\upvartheta_K)^{p-d} \mass(T), \mbox{ and}
    \tag{\ref{eq:simpdefthmMR} revisited} \\
    \mass(Q) & \leq  \Updelta(4\upvartheta_{K})^{p-d}(1+4\upvartheta_{K})\mass(\boundary T). 
    \tag{\ref{eq:simpdefthmMQ} revisited}
  \end{align}
To obtain the flat
norm distance corresponding to the third bound given by Sullivan in
Equation (\ref{eq:SullbndFPbdyP}), we use the definition of flat norm
distance between two currents specified in Equation
\eqref{eq:flatdist}. Using $T-P = \boundary Q + R$, we combine two of
our bounds specified in Equations \eqref{eq:simpdefthmMR} and
\eqref{eq:simpdefthmMQ} to get
\begin{align*}
\F_\lambda(T,P) & ~\leq~ \Updelta (4\upvartheta_{K})^{p-d} \, \left( \, \mass(T) + \lambda(1 
	+ 4 \upvartheta_{K}) \mass(\boundary T) \, \right). \hspace*{0.5in}
\end{align*}

To gain a better understanding of how the two sets of bounds compare,
we compute these bounds explicitly for the case of a $2$-current in a
regular tetrahedral complex (thus, $p = 3$ and $d = 2$). Notice that
this instance is close to a best case for Sullivan's bounds, as less
regular complexes affect them more severely. With this point in mind,
we present in Table \ref{table:boundcomparison} our bounds and
Sullivan's bounds on both a regular tetrahedral complex and one on
which we stretch the regular tetrahedra by a factor of 10 in a
direction normal to one of their faces (i.e., turn them into skinny,
spike-like simplices). 
\begin{table}[h]
\centering
\renewcommand{\arraystretch}{2}
\begin{tabular}{@{}rccr@{}}\toprule
\\[-7ex]
Quantity & Sullivan's bound & Our bound\\
\hline

\multicolumn{3}{c}{Regular tetrahedra}\\[-1ex]

$M(P)$ & $(1.2 \times 10^5)\, \mass(T)$ & \twolinecelleq{(1.6 \times 10^3) \, \mass(T)}{ + (2.5 \times 10^6)\, \Updelta\mass(\boundary T)}
\\

$M(\boundary P)$ & $(8.7 \times 10^3)\,\mass(\boundary T)$ & $(2.5 \times 10^6)\,\mass(\boundary T)$ \\

$\F_\lambda(T, P)$ & \twolinecelleq{(2.4 \times 10^5)\,\Updelta\mass(T)}{+(1.7 \times
10^3)\,\Updelta\lambda\mass(\boundary T)} & \twolinecelleq{(1.6 \times
10^3)\,\Updelta\mass(T)}{ + (2.5 \times 10^6) \,
\Updelta\lambda\mass(\boundary T)} \vspace*{0.1in} \\[-2ex]

\multicolumn{3}{c}{Stretched tetrahedra}\\[-1ex]

$M(P)$ & $(5.5 \times 10^9)\,\mass(T)$ & \twolinecelleq{(3.7 \times 10^4)\,\mass(T) }{+ (1.4 \times 10^9)\, \Updelta\mass(\boundary T)} \\

$M(\boundary P)$ & $(1.1 \times 10^7)\,\mass(\boundary T)$ & $(1.4 \times 10^9)\,\mass(\boundary T)$ \\

$\F_\lambda(T, P)$ & 
\twolinecelleq{(1.1 \times 10^{10})\Updelta\mass(T)}{ + (2.3 \times 10^7)\Updelta\lambda\mass(\boundary T)}
& \twolinecelleq{(3.7 \times 10^4)\,\Updelta\mass(T)}{ + (1.4 \times 10^9)\Updelta\lambda\mass(\boundary T)}\\
\bottomrule
\end{tabular}
\caption{Comparison of our bounds with those obtained by Sullivan for
  a 2-current in a (1) 3-complex of congruent regular tetrahedra and
  (2) a 3-complex of congruent stretched tetrahedra which are created
  by taking regular tetrahedra and multiplying their height by a
  factor of 10.}
\label{table:boundcomparison}
\end{table}

For the regular tetrahedral complex and the $\mass(P)$ bound, our
coefficient of $\mass(T)$ is more than $74$ times better, but we do
have a second term that can be quite large, but diminishes in
importance if the complex is subdivided appropriately (see Remark
\ref{rem:subdivisionregularity}).  In the stretched complex, our
coefficient on $\mass(T)$ is $1.5 \times 10^5$ times better,
indicating that our bound is better behaved for irregular complexes.
Our bound on $\mass(\boundary P)$ is about $290$ times worse than
Sullivan's for the regular tetrahedra, and about $120$ times worse for
the stretched complex.  For the flat norm bound in the regular
complex, we are about $148$ times better on the $\mass(T)$ term and
about $145$ times worse on the $\mass(\boundary T)$ term.  On the
stretched complex, our $\mass(T)$ coefficient is about $3 \times 10^5$
times better, and our $\mass(\boundary T)$ coefficient is about $60$
times worse.  We also note that in the case of the flat norm with
scale, our larger $\mass(\boundary T)$ coefficient becomes less
important for small $\lambda$.

\begin{remark}
  For the important case where $\boundary T$ is empty, i.e., when $T$
  is a cycle, we have $\mass(\boundary T) = 0$, and hence our bounds
  are uniformly better than Sullivan's.
\end{remark}
As compared to Sullivan, we are able to take advantage of our
simplicial setting to get better bounds on the mass expansion of $T$.
While our mass expansion bounds involving $\boundary T$ are currently
inferior to Sullivan's, we suspect our arguments can be tightened and
modified to obtain bounds that are better in all cases.  More
importantly, our bounds are less sensitive to simplicial irregularity.
Given the challenges inherent in creating meshes without slivers even
in three dimensions \cite{ChDeSh2012}, bounds that behave well in
their presence are highly desirable.

\section{Computational Results} \label{sec:compresults}

We illustrate computations of the \MSFN{} by describing the flat norm
decompositions of a $2$-manifold with boundary embedded in $\R^3$ (see
Figure \ref{fig:pyrsurf}). The input set has the underlying shape of a
pyramid, to which several peaks and troughs of varying scale, as well
as random noise, have been added. We model this set as a piecewise
linear $2$-manifold with boundary, and find a triangulation of the
same as a subcomplex of a tetrahedralization of the $2 \times 2 \times
2$ cube centered at the origin, within which the set is located. We
use the method of constrained Delaunay tetrahedralization
\cite{Si2010} implemented in the package TetGen \cite{tetgen} for this
purpose. We then compute the \MSFN{} decomposition of the input set at
various scale ($\lambda$) values. At high values, e.g., when
$\lambda=6$, the optimal decomposition resembles the input set with
the small kinks due to random noise smoothed out. At the other end,
for $\lambda=0.01$, the optimal decomposition resembles a flat
``sheet''. For intermediate values of $\lambda$, the optimal
decomposition captures features of the input set at varying scales.

\begin{figure}[htp!]
\centering 
\vspace*{0.5in}
 \includegraphics[scale=0.51, trim=1.2in 4.1in 1.5in 1.5in, clip]{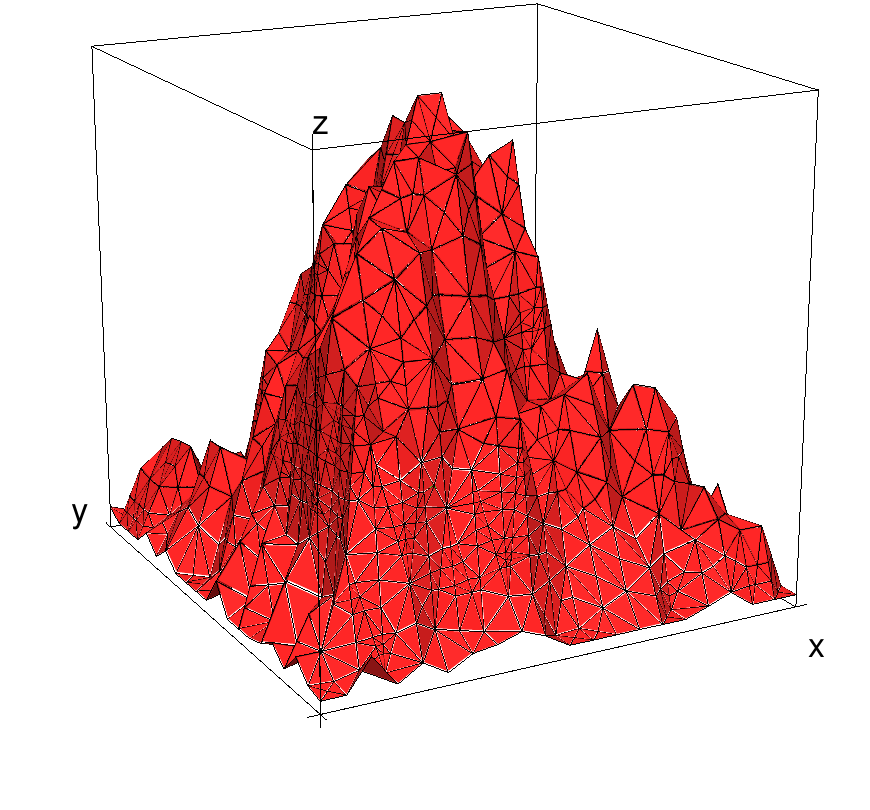} 
\hfill
 \includegraphics[scale=0.51, trim=1.2in 4.1in 1.5in 1.5in, clip]{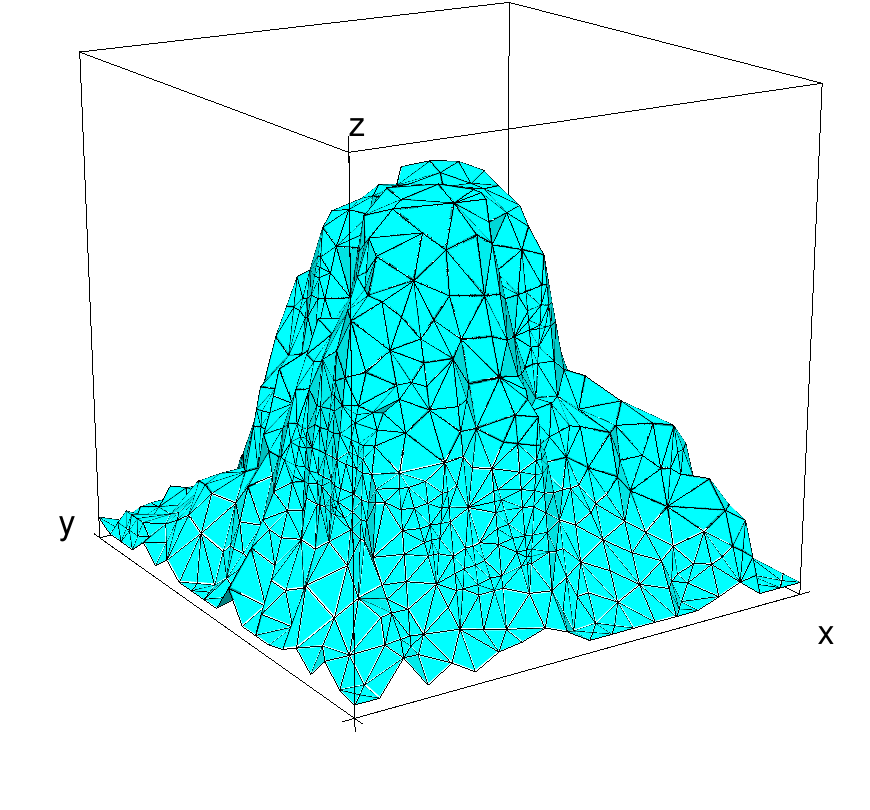} \\
 \includegraphics[scale=0.51, trim=1.2in 4.1in 1.5in 1.2in, clip]{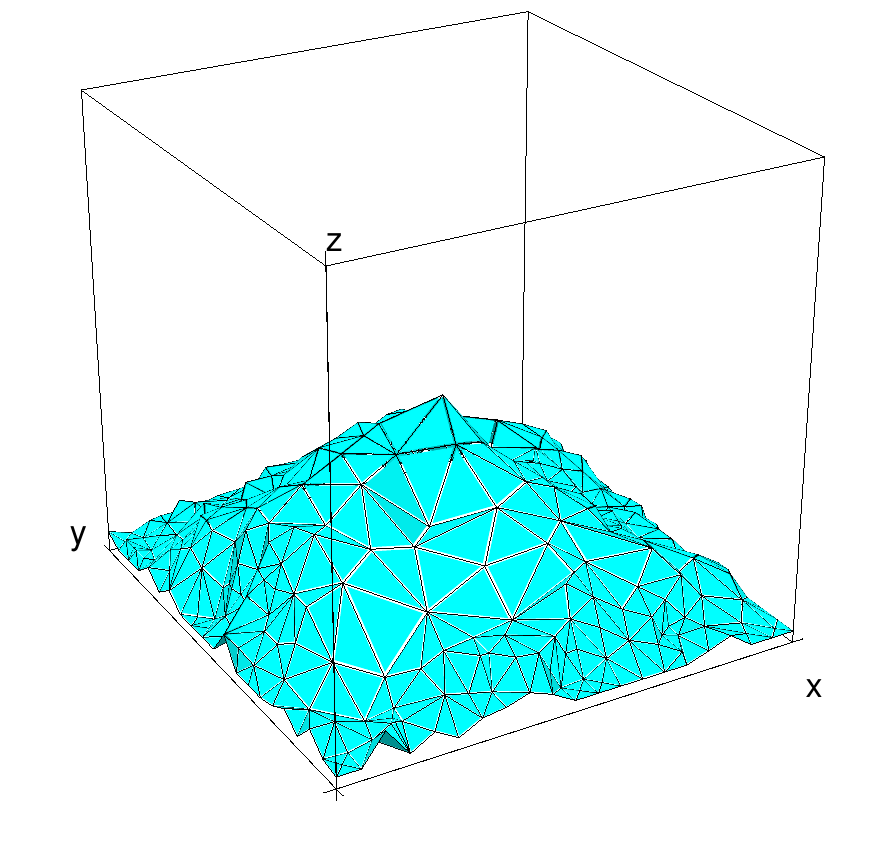} 
\hfill
 \includegraphics[scale=0.51, trim=1.2in 4.1in 1.5in 1.2in, clip]{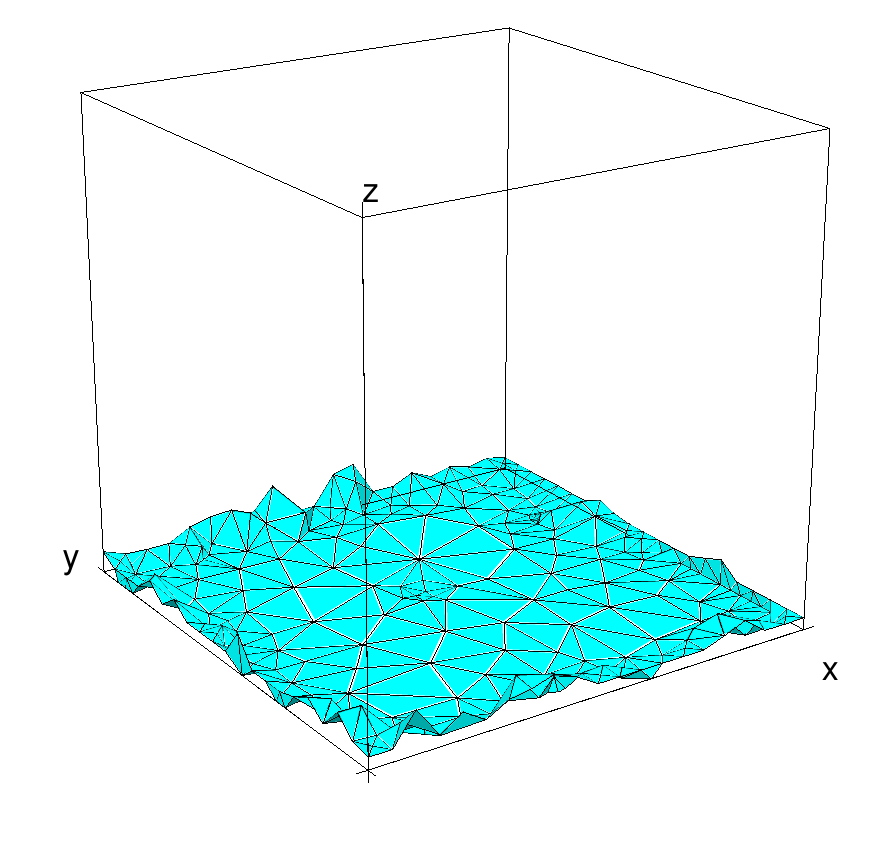} 
\vspace*{0.1in}
\caption{\label{fig:pyrsurf} Top left: A view of original 
pyramidal surface in three dimensions. The remaining three figures
show the flat norm decomposition for scales $\lambda=6$ (top right),
$\lambda=2$ (bottom left), and $\lambda=0.01$ (bottom right). See text
for further explanation. The images were generated using the package
TetView \cite{tetview}.}
\end{figure}

The entire $3$-complex mesh modeling the cube in question consisted of
14,002 tetrahedra and 28,844 triangles. For each $\lambda$,
computation of the \MSFN{} described above took only a few minutes on a
regular PC using standard functions from MATLAB. This example
demonstrates the feasibility of efficiently computing flat norm
decompositions of large datasets in high dimensions, for the purposes
of denoising or to recover scale information of the data.

\section{Discussion}

Our result on simplicial deformation (Theorem \ref{thm:simpldeform})
places the definition of the \MSFN{} into clear context. If a current
lives in the underlying space of a simplicial complex, we can deform
it to be a simplicial current on the simplicial complex, and do so
with {\em controlled} error. In fact, by subdividing the simplicial
complex carefully, we can move this error as close to zero as we
like. Since the \MSFN{} could be computed efficiently when the
simplicial complex does not have relative torsion, one could naturally
use our approach to compute the flat norm of a large majority of
currents in arbitrarily large dimensions. An important open question
in this context is whether the \MSFN{} of a current on a simplicial
complex {\em with} relative torsion could be approximated efficiently
by {\em coarsening} the complex so that the relative torsion is
removed. For instance, it has been observed recently that edge
contractions could remove existing relative torsion while preserving
the homology groups of the simplicial complex in certain cases
\cite{DeHiKrSm2013}.

The \MSFN{} problem, similar to the recent results on the \OBCP{}
\cite{DuHi2011}, apply notions from algebraic topology and discrete
optimization to problems from geometric measure theory such as flat
norm of currents and area-minimizing hypersurfaces. What other classes
of problems from the broader area of geometric analysis could we
tackle using similar approaches? One such question appears to be the
following: under what conditions is the flat norm decomposition of an
integral current guaranteed to be another integral current?  Working
in the setting of simplicial complexes, results on the existence of
integral optimal solutions for instances of ILPs with integer
right-hand side vectors may prove useful in answering this question.

While $L^1$TV and flat norm computations have been used widely on data
in two dimensions, such as images, the \MSFN{} opens up the
possibility of utilizing flat norm computations for higher dimensional
data. Similar to the flat norm-based signatures for distinguishing
shapes in two dimensions \cite{Vietal2010}, could we define shape
signatures using \MSFN{} computations to characterize the geometry of
sets in arbitrary dimensions? The sequence of optimal \MSFN{}
decompositions of a given set for varying values of the scale
parameter $\lambda$ captures all the scale information of its
geometry. Could we represent all this information in a compact manner,
for instance, in the form of a barcode?

\section*{Acknowledgments}

We acknowledge the financial support from the National Science
Foundation (NSF) through grants DMS-0914809 and CCF-1064600.

\bibliographystyle{plain} 
\bibliography{homology,flatnorm}

\begin{thebibliography}{10}

\bibitem{AgHaTh2006}
Ian Agol, Joel Hass, and William Thurston.
\newblock {The computational complexity of knot genus and spanning area}.
\newblock {\em Transactions of the American Mathematical Society},
  358(9):3821--3850, 2006.

\bibitem{BeTs1997}
Dimitris Bertsimas and John~N. Tsitsiklis.
\newblock {\em Introduction to Linear Optimization}.
\newblock Athena Scientific, Belmont, MA., 1997.

\bibitem{ChEs2005}
Tony~F. Chan and Selim Esedoglu.
\newblock Aspects of total variation regularized {$L^1$} function
  approximation.
\newblock {\em SIAM Journal on Applied Mathematics}, 65(5):1817--1837, 2005.

\bibitem{ChFr2010a}
Chao Chen and Daniel Freedman.
\newblock Hardness results for homology localization.
\newblock In {\em SODA '10: Proc. 21st Ann. ACM-SIAM Sympos. Discrete
  Algorithms}, pages 1594--1604, 2010.

\bibitem{ChDeSh2012}
Siu-Wing Cheng, Tamal Dey, and Jonathan Shewchuk.
\newblock {\em Delaunay Mesh Generation}.
\newblock CRC Press, 2012.

\bibitem{SiGh2006}
Vin de~Silva and Robert Ghrist.
\newblock Coordinate-free coverage in sensor networks with controlled
  boundaries via homology.
\newblock {\em International Journal of Robotics Research}, 25(12):1205--1222,
  2006.

\bibitem{SiGh2007a}
Vin de~Silva and Robert Ghrist.
\newblock Coverage in sensor networks via persistent homology.
\newblock {\em Algebraic and Geometric Topology}, 7:339--358, 2007.

\bibitem{DHK2010}
Tamal~K. Dey, Anil~N. Hirani, and Bala Krishnamoorthy.
\newblock Optimal homologous cycles, total unimodularity, and linear
  programming.
\newblock In {\em STOC '10: Proc. 42nd Ann. Sympos. Theo. Comput.}, pages
  221--230, 2010.

\bibitem{DeHiKrSm2013}
Tamal~K. Dey, Anil~N. Hirani, Bala Krishnamoorthy, and Gavin~W. Smith.
\newblock Edge contractions and simplicial homology.
\newblock 2013.
\newblock \href{http://arxiv.org/abs/1304.0664}{arxiv:1304.0664}.

\bibitem{DuHi2011}
Nathan~M. Dunfield and Anil~N. Hirani.
\newblock The least spanning area of a knot and the optimal bounding chain
  problem.
\newblock In {\em Prooceedings of the 27th ACM Annual Symposium on
  Computational Geometry}, SoCG '11, pages 135--144, 2011.

\bibitem{Edbook2006}
Herbert Edelsbrunner.
\newblock {\em Geometry and Topology for Mesh Generation}.
\newblock Cambridge Monographs on Applied and Computational Mathematics.
  Cambridge University Press, New York, NY, USA, 2006.

\bibitem{EdGr2000}
Herbert Edelsbrunner and Daniel Grayson.
\newblock Edgewise subdivision of a simplex.
\newblock {\em Discrete Computational Geometry}, 24:707--719, 2000.

\bibitem{Federer1969}
Herbert Federer.
\newblock {\em Geometric Measure Theory}.
\newblock Die Grundlehren der mathematischen Wissenschaften, Band 153.
  Springer-Verlag, New York, 1969.

\bibitem{GoYi2009}
Donald Goldfarb and Wotao Yin.
\newblock Parametric maximum flow algorithms for fast total variation
  minimization.
\newblock {\em SIAM Journal on Scientific Computing}, 31(5):3712--3743, 2009.

\bibitem{GuHeRoTeTs1993}
Osman G\"uler, Dick {den Hertog}, Cornelis Roos, Tamas Terlaky, and Takashi
  Tsuchiya.
\newblock Degeneracy in interior point methods for linear programming: a
  survey.
\newblock {\em Annals of Operations Research}, 46-47(1):107--138, March 1993.

\bibitem{KoZa2002}
Vladimir Kolmogorov and Ramin Zabih.
\newblock What energy functions can be minimized via graph cuts?
\newblock In {\em Proceedings of the 7th European Conference on Computer
  Vision-Part III}, ECCV '02, pages 65--81, London, UK, UK, 2002.
  Springer-Verlag.

\bibitem{KrantzParks2008}
Steven~G. Krantz and Harold~R. Parks.
\newblock {\em Geometric Integration Theory}.
\newblock Cornerstones. Birkhauser, 2008.

\bibitem{Morgan2008}
Frank Morgan.
\newblock {\em Geometric Measure Theory: A Beginner's Guide}.
\newblock Academic Press, fourth edition, 2008.

\bibitem{MoVi2007}
Simon~P. Morgan and Kevin~R. Vixie.
\newblock {$L^1$TV} computes the flat norm for boundaries.
\newblock {\em Abstract and Applied Analysis}, 2007:Article ID 45153,14 pages,
  2007.

\bibitem{Munkres1984}
James~R. Munkres.
\newblock {\em Elements of Algebraic Topology}.
\newblock Addison--Wesley Publishing Company, Menlo Park, 1984.

\bibitem{Poincare2010}
Henri Poincar{\'e}.
\newblock {\em {Papers on Topology: Analysis Situs and Its Five Supplements}}.
\newblock History of Mathematics. American Mathematical Society, 2010.

\bibitem{Schrijver1986}
Alexander Schrijver.
\newblock {\em Theory of Linear and Integer Programming}.
\newblock Wiley-Interscience Series in Discrete Mathematics. John Wiley \& Sons
  Ltd., Chichester, 1986.

\bibitem{tetgen}
Hang Si.
\newblock {TetGen}: {A} quality tetrahedral mesh generator and a 3{D}
  {D}elaunay triangulator.
\newblock Available at http://tetgen.berlios.de.

\bibitem{tetview}
Hang Si.
\newblock {TetView}: {A} tetrahedral mesh and piecewise linear complex viewer.
\newblock Available at http://tetgen.berlios.de/tetview.html.

\bibitem{Si2010}
Hang Si.
\newblock Constrained {D}elaunay tetrahedral mesh generation and refinement.
\newblock {\em Finite Elements in Analysis and Design}, 46:33--46, January
  2010.

\bibitem{Sullivan1990}
John~M. Sullivan.
\newblock {\em A Crystalline Approximation Theorem for Hypersurfaces}.
\newblock PhD thesis, Princeton University, 1990.

\bibitem{Tardos1986}
\'Eva Tardos.
\newblock A strongly polynomial algorithm to solve combinatorial linear
  programs.
\newblock {\em Operations Research}, 34(2):250--256, March 1986.

\bibitem{VeDa1968}
Arthur~F. {Veinott, Jr.} and George~B. Dantzig.
\newblock Integral extreme points.
\newblock {\em SIAM Review}, 10(3):371--372, 1968.

\bibitem{Vietal2010}
Kevin~R. Vixie, Keith Clawson, Thomas~J. Asaki, Gary Sandine, Simon~P. Morgan,
  and Brandon Price.
\newblock Multiscale flat norm signatures for shapes and images.
\newblock {\em Applied Mathematical Sciences}, 4(13-16):667--680, 2010.

\end{thebibliography}

\end{document}